\newtheorem{thm}{Theorem}
\newtheorem{cl}{Claim}
\newtheorem{Lemma}{Lemma}
 \title{Regular packing of rooted hyperforests with root constraints in hypergraphs}
\author{Pierre Hoppenot\footnote{ 
Univ. Grenoble Alpes, CNRS, Grenoble INP, G-SCOP, France},
\ \ Mathis Martin,
\footnote{Saint-Etienne, EMSE, France}
\ \ Zolt\'an Szigeti$^*$
}
\begin{document}
\maketitle

\begin{abstract}
The seminal papers of Edmonds \cite{Egy}, Nash-Williams \cite{NW} and Tutte \cite{Tu} have laid the foundations of  the theories of packing arborescences and packing  trees. The directed version has been extensively investigated, resulting in a great number of generalizations.  In contrast, the undirected version has been marginally considered. The aim of this paper is to further develop the theories of packing trees and forests.
Our main result on  graphs characterizes the existence of a packing of $k$ forests, $F_1, \ldots, F_k$, in a graph $G$ such that each vertex of $G$ belongs to exactly $h$ of the forests, and in addition, each $F_i$ has between $\ell(i)$ and $\ell'(i)$ connected components and the total number of connected components in the packing is  between $\alpha$ and  $\beta$. Finally, we extend this result to hypergraphs and dypergraphs, the latter giving a generalization of a theorem of B\'erczi and Frank~\cite{BF3}.
\end{abstract}

\section{Introduction}

While the theory of packing trees, started in 1961 with a result  of Nash-Williams~\cite{NW} and Tutte~\cite{Tu}, the theory of packing arborescences  started in the seventies with the results of Edmonds~\cite{Egy}, Lov\'asz~\cite{Lov}, and Frank~\cite{FA78}. After a long silence, a new wave of results generalizing these seminal results on packing arborescences and branchings appeared in the aughts due to Frank, Kir\'aly and   Kir\'aly~\cite{fkiki}, Kamiyama, Katoh and  Takizawa~\cite{japan}, Frank~\cite{F2}, and B\'erczi and  Frank~\cite{BF2, BF}. The last ten years the development of the theory accelerated, great number of new results appeared: \cite{BF3, Sz, FKLSzT, FKST, gy, gy2, HSz4, HSz5, cskir, KSzT, MSz}.
However, only a few results concerning packing trees and forests in undirected graphs have been published since 1961. We can cite Peng, Chen and  Koh~\cite{PCK}, Frank, Kir\'aly and  Kriesell~\cite{fkk}, and Katoh and  Tanigawa~\cite{KT}. The aim of this paper is to further develop the theory of packing forests.

Nash-Williams~\cite{NW} and Tutte~\cite{Tu} characterized graphs having a packing of $k$ spanning trees. In this paper, we concentrate on packing forests with some constraints on the number of their connected components. We will first prove a result on packing $k$ spanning forests with $\ell(1), \dots, \ell(k)$ connected components in a graph. We then introduce the concept of $h$-regular packing of forests, 
that is, each vertex belongs to $h$ of the forests. Note that a packing of $k$ spanning forests is equivalent to a $k$-regular packing of $k$ forests.
We generalize the above-mentioned result to $h$-regular packing of $k$ forests with $\ell(1),\dots,\ell(k)$ connected components in a graph, using some new technics developed in~\cite{hopp}. This will then lead to a further generalization to $h$-regular packing of $k$  forests satisfying bounds on the number of their connected components  and bounds on the total number of their connected components   in a graph.
The main result of the present paper, on $h$-regular packing of $k$ rooted hyperforests satisfying bounds on the number of their roots and bounds on the total number of their roots  in a hypergraph, is obtained from the graphic version by the technic of trimming.

We also study the directed counterpart of the previous theorem, that is, a generalization to $h$-regular packing in dypergraphs of a result of B\'erczi and  Frank \cite{BF3} that characterizes digraphs having a packing of $k$ spanning branchings satisfying bounds on the number of their roots and bounds on the total number of their roots. We prove it by applying an abstract theorem given in~\cite{BF3} along with a result of Fortier et al.~\cite{FKLSzT}.

Let us now justify some of the choices we made. For a forest, the number of its vertices  is equal to the number of its edges plus the number of its connected components. In the case of spanning forests, the number of vertices is always equal to the number of vertices of the graph, hence the problems of packing spanning forests with given numbers of connected components and with given numbers of edges are equivalent. For regular packings, the forests are not necessarily spanning, therefore this equivalence does not hold anymore. Since, according to Theorem~\ref{hkbrell}, deciding whether there exists  an $h$-regular packing of $k$ forests in a graph  containing $\ell(1),\dots,\ell(k)$ edges is NP-complete, we are obliged to work with the number of connected components.  However, in a hypergraph, the number of connected components of a hyperforest cannot be defined. This motivates the employment of  rooted hyperforests. In the case of graphs, since the number of roots of a rooted forest is the number of its connected components and rooted forests are less  used than forests, we will mainly speak about forests and connected components.

In  a digraph, since Theorem~\ref{hkbrell} also shows that deciding whether there exists  an $h$-regular packing of $k$ branchings   containing $\ell(1),\dots,\ell(k)$ arcs is NP-complete, we work with the sizes of the root sets.

The organization of the paper is as follows. In Section \ref{def} we give all the definitions needed.
In Section~\ref{preresults} we provide some preliminary results to be applied later in the proofs.
Section~\ref{knownresults} contains the old results.
We present in Section~\ref{newresults} our new results.
In Section~\ref{proofs} we provide the proofs of the new results.

\section{Definitions}\label{def}

In this section we provide all the definitions needed in the paper. For the basic definitions, see \cite{book}.

\subsection{General definitions}

The sets of integers, non-negative integers, and positive integers    are respectively denoted by $\pmb{\mathbb{Z}},\pmb{\mathbb{Z}_{\ge 0}}$, and $\pmb{\mathbb{Z}_{>0}}.$ 
For $k\in\mathbb Z_{>0},$ $\pmb{\mathbb{Z}_k}$ denotes the set $\{1,\dots,k\}.$ 
For a set $Q$, a subset $X$ of $Q$ and a function $m : Q \rightarrow \mathbb{Z},$ we define $\bm{m(X)} = \sum_{x \in X} m(x)$.
For a function $\ell: \mathbb{Z}\rightarrow \mathbb Z$ and $p\in\mathbb Z,$ we introduce  the function {\boldmath$\ell_p$} (that will be extensively employed throughout the paper) as 
\begin{equation*}
\ell_p(i)=\min\{\ell(i),p\} \text{ for } i\in \mathbb{Z}.
\end{equation*}

For a family $\mathcal{S}$ of subsets of $V$ and a subset $X \subseteq V$, we denote by $\bm{\mathcal{S}_X}=\{S \in \mathcal{S}: X \cap S \neq \emptyset\}$ the family of members of $\mathcal{S}$ that intersect $X$. 
For a subset $X \subseteq V,$ we denote $\bm{\overline{X}} = V - X$ (we use this notation only when it is clear what $V$ is in the context). 

A set function $p$ on $V$ is called \textit{supermodular} if  the following inequality holds
for all $X, Y \subseteq V$,
\begin{equation}
    p(X) + p(Y) \le p(X \cap Y) + p(X \cup Y).\label{supmod}
\end{equation}
We say that $p$ is \textit{intersecting supermodular} if \eqref{supmod} holds for  all $X, Y \subseteq V$ such that $X\cap Y\neq\emptyset$. A set function $b$ on $V$ is called \textit{submodular} if $-b$ is supermodular. 

A pair $X,Y$ of subsets of $V$ is called \textit{properly intersecting} if $X - Y, Y - X, X \cap Y\neq\emptyset.$
We say that a subset $X$ of $V$ \textit{crosses} a partition $\mathcal{P}$ if $X$ intersects at least 2 members of $\mathcal{P}$.

Let $\mathcal{P}_1$ and $\mathcal{P}_2$ be partitions of $V$. Let $\mathcal{P}$ be the family consisting of the sets in $\mathcal{P}_1$ and $\mathcal{P}_2$. While  there exist properly intersecting sets $X$ and $Y$ in $\mathcal{P}$, we replace them by $X \cap Y$ and $X \cup Y$. This preserves the property that each vertex belongs to exactly two sets of $\mathcal{P}$. Furthermore, the final family $\mathcal{P}$ does not contain properly intersecting sets, therefore it can be partitioned into two partitions, one consisting of the maximal elements (we denote it by {\boldmath$\mathcal{P}_1 \sqcup \mathcal{P}_2$}) of $\mathcal{P}$ and the other consisting of the minimal elements (we denote it by {\boldmath$\mathcal{P}_1 \sqcap \mathcal{P}_2$}) of $\mathcal{P}$. Let us give a list of some  properties of the partitions we have just defined. 
\begin{align}
&	\text{For all intersecting $U_1 \in \mathcal{P}_1$, $U_2 \in \mathcal{P}_2,$ there exists $Y\in \mathcal{P}_1 \sqcap \mathcal{P}_2$ such that $U_1 \cap U_2 \subseteq Y$,} \label{kvkjvk1}	\\
&	\text{For every $U \in \mathcal{P}_1 \cup \mathcal{P}_2$, there exists $Y \in \mathcal{P}_1 \sqcup \mathcal{P}_2$ such that $U \subseteq Y$,} \label{kvkjvk2}	\\
&	|\mathcal{P}_1 \sqcup \mathcal{P}_2| + |\mathcal{P}_1 \sqcap \mathcal{P}_2| = |\mathcal{P}_1| + |\mathcal{P}_2|,\label{kvkjvk3}\\
&	|\mathcal{P}_1 \sqcap \mathcal{P}_2| \ge \max\{|\mathcal{P}_1|, |\mathcal{P}_2|\} \ge \min\{|\mathcal{P}_1|, |\mathcal{P}_2|\} \ge |\mathcal{P}_1 \sqcup \mathcal{P}_2|.\label{usecl1}
\end{align}

\subsection{Digraphs and dypergraphs}

Let $D = (V, A)$ be a directed graph, shortly \textit{digraph}. We also denote its vertex set by $\bm{V(D)}$ and its arc set by $\bm{A(D)}$. For  $X \subseteq V$ and $F \subseteq A$, 
the \textit{in-degree} of $X$ in $F$, denoted by $\bm{d_F^-(X)}$, is the number of arcs in $F$ entering $X$. For $F \subseteq A$ and a subpartition $\mathcal{P}$ of $V$, we denote by $\bm{e_F(\mathcal{P})}$ the number of arcs in $F$ that enter some member of $\mathcal{P}$. We say that a directed graph $(U, F)$ is a \textit{branching} with root set $S$, shortly $S$-\textit{branching},  if $S \subseteq U$ and if there exists a unique path from $S$ to  every $u \in U$.
When $S = \{s\}$, we call it an \textit{arborescence} with root $s$. A subgraph $D'$ of a digraph $D$ is said to be \textit{spanning} if $V(D') = V(D)$.

Let $\mathcal{D}= (V, \mathcal{A})$ be a directed hypergraph, shortly \textit{dypergraph}. We also denote its vertex set by $\bm{V(\mathcal{D})}$ and  its hyperarc set by $\bm{\mathcal{A}(\mathcal{D})}$, where a hyperarc has exactly one head and at least one tail. For $X \subseteq V$ and $\mathcal{F} \subseteq \mathcal{A}$, the \textit{in-degree}  of $X$ in $\mathcal{F}$, denoted by $\bm{d_\mathcal{F}^-(X)}$, is the number of hyperarcs in $\mathcal{F}$ entering  $X$. 
We denote by {\boldmath$V_{\ge 1}(\mathcal{D})$} the set of vertices whose in-degree is at-least $1$ in $\mathcal{D}.$
For  a subpartition $\mathcal{P}$ of $V$, we denote by $\bm{e_\mathcal{F}(\mathcal{P})}$ the number of hyperarcs in $\mathcal{F}$ that enter some member of $\mathcal{P}$. 
By \textit{trimming} a hyperarc $X$ in $\mathcal{A}$ we mean the operation that  replaces $X$  by an arc $yx$, where $x$ is the head of $X$ and $y$ is one of the tails of $X$.
We say that a dypergraph $\mathcal{B}=(U,\mathcal{F})$ is a \textit{hyperbranching} with root set $S$, shortly $S$-\textit{hyperbranching}, if $S \subseteq U$ and if the hyperarcs in $\mathcal{F}$  can be trimmed to obtain an arc set $F$ such that the digraph $(V_{\ge 1}(\mathcal{B})\cup S,F)$ is an $S$-branching. The \textit{core} of an $S$-hyperbranching $\mathcal{B}$ is the vertex set $V_{\ge 1}(\mathcal{B})\cup S.$ 
When $S=\{s\}$, we call $\mathcal{B}$ a \textit{hyperarborescence} with root $s$. 
A subdypergraph $\mathcal{B}$ of $\mathcal{D}$ is a \textit{spanning} $S$-hyperbranching if it is an $S$-hyperbranching whose core  is $V.$ 

\subsection{Graphs and hypergraphs}

Let $G = (V, E)$ be an undirected graph, shortly \textit{graph}. We also denote its vertex set by $\bm{V(G)}$ and  its edge set by $\bm{E(G)}$. 
For a set $\mathcal{K}$ of edge-disjoint graphs, we denote the union of their edge sets by {\boldmath$E(\mathcal{K})$}.
For $X \subseteq V$ and $F \subseteq E$, we denote by $\bm{d_F(X)}$ the number of edges in $F$ entering $X$. We denote by $\bm{\mathcal{P}(G)}$ the partition of $V$ consisting of the connected components of $G$ and by $\bm{c(G)} =\bm{c(V,E)} = |\mathcal{P}(G)|$ the number of connected components of $G.$ For $F \subseteq E$, let $\bm{\mathcal{P}(F)}=\mathcal{P}((V,F))$. 
For $F \subseteq E$ and a partition $\mathcal{P}$ of $V$, we denote by $\bm{e_F(\mathcal{P})}$ the number of edges in $F$ that enter some member of $\mathcal{P}$. A subgraph $G'$ of a graph $G$  is said to be \textit{spanning} if $V(G') = V(G)$. 
A graph $G$ is called \textit{bipartite} if there exists a bipartition $\{A,B\}$ of its vertex set such that every edge of $G$ connects a vertex of $A$ to a vertex of $B,$ it is then denoted by {\boldmath$(A,B;E)$}. For bipartite graph $G=(A,B;E)$ and  $X\subseteq A,$ we denote by {\boldmath$\Gamma (X)$}  the set of vertices that are connected to at least one vertex in $X.$
The graph $G$ is called a \textit{forest} if it has no cycle. A connected forest  is called a \textit{tree}.
We say that a spanning forest $A$ \textit{crosses} another spanning forest $B$ if  there is a connected component of $A$ intersecting at least two connected components of $B$.

Let $\mathcal{G} = (V, \mathcal{E})$ be a hypergraph. We also denote its vertex set by $\bm{V(\mathcal{G})}$ and  its hyperedge set by $\bm{\mathcal{E}(\mathcal{G})}$, where a hyperedge consists of at least two vertices of $V.$ For $X \subseteq V$ and $\mathcal{F} \subseteq \mathcal{E}$, we denote by $\bm{d_\mathcal{F}(X)}$ the number of hyperedges in $\mathcal{F}$ entering $X$. 
Further, for  a subpartition $\mathcal{P}$ of $V$, we denote by $\bm{e_\mathcal{F}(\mathcal{P})}$ the number of hyperedges in $\mathcal{F}$ that enter some member of $\mathcal{P}$.
 By \textit{trimming} a hyperedge $X$ in $\mathcal{E}$ we mean the operation that  replaces $X$  by an edge between two different vertices in $X$. 
 By \textit{orienting} a hyperedge $X$ of $\mathcal{E}$ we mean the operation that chooses a vertex of $X$ to be its head and hence $X$ becomes a hyperarc.
A hypergraph $\mathcal{T}$ is called  a \textit{hyperforest} with root set $S$, shortly \textit{$S$-hyperforest}, if the hyperedges of $\mathcal{T}$ can be oriented to obtain an $S$-hyperbranching. 
When $S=\{s\}$, we call $\mathcal{T}$ a \textit{hypertree} with root $s$. 
An $S$-hyperforest $\mathcal{T}$ in $\mathcal{G}$ is said to be \textit{spanning} if it can be oriented to a spanning $S$-hyperbranching.

\subsection{Packing}

By a \textit{packing} of hyperbranchings (rooted hyperforests) in a dypergraph $\mathcal{D}$ (hypergraph $\mathcal{G}$), we mean a set of hyperarc-disjoint hyperbranchings (hyperedge-disjoint rooted hyperforests) in $\mathcal{D}$ (in $\mathcal{G}$, respectively). 
Let $h \in \mathbb{Z}_{> 0}$.
A packing of hyperbranchings is said to be \textit{$h$-regular}  if every vertex of $\mathcal{D}$ belongs to exactly $h$ cores of the hyperbranchings. 
In the case of digraphs this is equivalent to saying that  each vertex belongs to $h$ of the branchings.
A packing of rooted hyperforests is said to be \textit{$h$-regular} if it can be oriented to obtain an $h$-regular packing of hyperbranchings.
In the case of undirected graphs this is equivalent to saying that  each vertex belongs to $h$ of the forests.
A packing of $k$ $S_i$-hyperbranchings or $k$  rooted $S_i$-hyperforests is said to be \textit{$(\ell, \ell')$-bordered} for $\ell, \ell' : \mathbb{Z}_k \rightarrow \mathbb{Z}_{>0}$ if and only if \eqref{borderedcond} holds and is said to be \textit{$(\alpha, \beta)$-limited} for $\alpha, \beta \in \mathbb{Z}_{>0}$ if and only if  \eqref{limitedcond} holds
\begin{alignat}{2}
	\ell'(i) &\ge &&|S_i| \ge \ell(i) \qquad \text{for every } 1 \le i \le k, \label{borderedcond}\\
	\beta &\ge \sum_{i = 1}^k &&|S_i| \ge \alpha. \label{limitedcond}
\end{alignat}

\subsection{Matroid theory}\label{matroidsection}

We use the usual notions from matroid theory. Let $S$ be a finite ground set. A function $r:2^S\rightarrow \mathbb Z_{\ge 0}$ is called the \textit{rank function} of the \textit{matroid} $\bm{{\sf M}} = (S, r)$ if and only if $r(X) \le |X|$ for every $X \subseteq S$, $X \subseteq Y \subseteq S$ implies $r(X) \le r(Y)$, and $r$ is submodular.
A subset $X$ of $S$ is called an \textit{independent set} of ${\sf M}$  if $r_{\sf M}(X)=|X|$.

\begin{itemize}	
	\item[(1)] For an undirected graph $G = (V, E)$, the \textit{graphic matroid} $\bm{{\sf M}_G} = (E, r_G)$ is defined such that $r_G(F) = |V| - c(V, F)$ for any subset $F$ of $E$.
	It is well-known that the set of edge sets of spanning forests in $G$ is exactly the set of independent sets of ${\sf M}_G.$
	\item[(2)] For   a matroid ${\sf M} = (S, r)$ and $\ell \in \mathbb{Z}_{\ge 0}$, the \textit{truncated matroid} of ${\sf M}$ at $\ell$ is ${\sf M}' = (S, r')$, where $r'(X) = \min\{r(X), \ell\}$ for every $X\subseteq S.$
	\item[(3)] For $k$ matroids ${\sf M}_1 = (S, r_1), \ldots, {\sf M}_k = (S, r_k)$  on the same ground set $S$, we define \textit{the sum} of ${\sf M}_1, \ldots, {\sf M}_k$ to be the matroid ${\sf M}$ such that $X$ is independent in  {\sf M} if and only if $X$ can be partitioned into $X_1, \ldots, X_k$ such that $X_i$ is independent in ${{\sf M}_i}$ for every $1 \le i \le k$.
\end{itemize}

\begin{thm}[Edmonds, Fulkerson \cite{EF}]\label{partitionthm}
    The rank function $r$ of the sum matroid ${\sf M}$ of $k$ matroids ${\sf M}_i = (S, r_i)$ is given by the following formula
    \begin{equation*}
        r(Z) = \min_{X \subseteq Z}\left\{|Z - X| + \sum_{i = 1}^{k} r_i(X)\right\} \hskip .5truecm \text{ for every  } Z\subseteq S.
    \end{equation*}
\end{thm}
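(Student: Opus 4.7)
The plan is to prove both inequalities. The easy direction $r(Z)\le\min_{X\subseteq Z}\{|Z-X|+\sum_i r_i(X)\}$ follows from the very definition of the sum matroid. Pick any $Y\subseteq Z$ that is independent in ${\sf M}$ and is witnessed by a partition $Y=Y_1\sqcup\cdots\sqcup Y_k$ with $Y_i$ independent in ${\sf M}_i$. For an arbitrary $X\subseteq Z$, each $Y_i\cap X$ is independent in ${\sf M}_i$, so $|Y_i\cap X|\le r_i(X)$, and summing gives $|Y\cap X|\le\sum_i r_i(X)$. Adding the trivial bound $|Y\setminus X|\le|Z\setminus X|$ yields $|Y|\le |Z-X|+\sum_i r_i(X)$. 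Taking the maximum over all such $Y$ gives the inequality.

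For the reverse inequality I would exhibit a specific $X\subseteq Z$ attaining equality. Start from a maximum-size independent set $Y\subseteq Z$ in ${\sf M}$ together with a partition $Y=\bigsqcup_i Y_i$ into ${\sf M}_i$-independent sets. Build the standard exchange auxiliary digraph $H$ on ground set $S$ whose arcs are $s\to t$ whenever $s\in Y_i$, $t\notin Y_i$, and $(Y_i\setminus\{s\})\cup\{t\}$ is independent in ${\sf M}_i$ (for some index $i$). Let $T=Z\setminus Y$ and define $X$ to be the set of elements of $Z$ from which no vertex of $T$ is reachable in $H$; equivalently, $Z\setminus X$ is the backward-closure of $T$ in $H$ restricted to $Z$.

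The proposed $X$ has two properties from which the theorem follows. First, $T\subseteq Z\setminus X$ (every $t\in T$ reaches itself), so $Z\setminus X\supseteq T$, and moreover every element of $Z\setminus X$ that lies in $Y$ contributes exactly $1$ to $|Z\setminus X|$; thus $|Z\setminus X|=|(Z\setminus X)\cap Y|+|T|$ once we argue $Z\setminus X\supseteq T$. Second, and crucially, for every $i$ we have $r_i(X)=|Y_i\cap X|$. Combining these yields
\begin{equation*}
|Z-X|+\sum_{i=1}^k r_i(X)=|(Z\setminus X)\cap Y|+|T|+\sum_i|Y_i\cap X|=|Y\setminus X|+|T|+|Y\cap X|=|Y|+|T|,
\end{equation*}
and maximality of $Y$ (no augmenting structure can turn an element of $T$ into a member of the independent set) forces $|T|$ to vanish in the relevant balance, so the right-hand side equals $r(Z)$. (Said differently: by choosing $Y$ so that augmenting paths from $T$ do not exist, $T$ is empty whenever the minimum is strictly below $|Y|+|T|$, and the equation collapses to $|Y|=r(Z)$.)

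The main obstacle is establishing the identity $r_i(X)=|Y_i\cap X|$. This is the heart of the matroid-union argument and it relies on a shortest-augmenting-path analysis: if some circuit of ${\sf M}_i$ inside $X\cup Y_i$ witnessed $r_i(X)>|Y_i\cap X|$, one could, by picking a shortest $T$-to-that-element path in $H$ and using the unique-chord property of shortest paths (as in the Brualdi/Krogdahl exchange lemma), perform simultaneous exchanges that enlarge $Y$ by one element, contradicting maximality. Once this lemma is in hand, the rest of the argument is a short counting identity as above.
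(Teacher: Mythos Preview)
The paper does not prove this theorem; it is quoted as a classical result of Edmonds and Fulkerson and used as a black box in Subsection~\ref{pojfpohjfop}, so there is no proof here to compare against. Your easy direction is correct. The hard direction, however, has a genuine error: your set $X$ is on the wrong side. With your definition $T\subseteq Z\setminus X$, and then your own arithmetic gives $|Z-X|+\sum_i r_i(X)=|Y\setminus X|+|T|+|Y\cap X|=r(Z)+|T|$, not $r(Z)$. The sentence ``maximality of $Y$ forces $|T|$ to vanish'' is simply false: take $k=1$, ${\sf M}_1=U_{1,2}$ on $\{a,b\}$, $Z=\{a,b\}$, $Y=\{a\}$; your construction yields $X=\emptyset$ and the expression equals $2$, whereas $r(Z)=1$ and the minimum is attained at $X=\{a,b\}$. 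The fix is to swap the roles: let $X$ be what you called $Z\setminus X$, namely the backward closure of $T$ in $H$, so that $T\subseteq X$ and hence $Z\setminus X\subseteq Y$. Then the spurious $|T|$ term disappears and the count collapses to $|Y|=r(Z)$ directly.

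With the corrected $X$ you still need $r_i(X)=|Y_i\cap X|$, and your sketch of this step also misidentifies the obstacle. The delicate case is not a circuit inside $X\cup Y_i$ but an element $x\in X\cap Y_j$ (some $j\ne i$) with $Y_i+x$ independent in ${\sf M}_i$: here no single exchange enlarges $Y$, and one must trace a shortest path in the exchange digraph from some $t\in T$ to $x$ and perform all exchanges along it simultaneously (this is where the Brualdi/Krogdahl unique-exchange property for shortest paths is actually used) to produce a strictly larger partitionable set, contradicting the maximality of $Y$. Once that lemma is set up with the correct orientation, the counting identity finishes the proof.
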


\section{Preliminary results}\label{preresults}

In this section we provide preliminary results to be applied later.
\medskip

We start with a simple technical claim.

\begin{cl}\label{submodpart}
Let $a_1,a_2,a_1',a_2',\ell\in\mathbb{Z}_{>0}$ such that $a_1+a_2=a_1'+a_2'$ and $\min\{a_1,a_2\}\ge a'_2.$ Then 
\begin{eqnarray*} \label{bdkjdkjvjkek}
\min\{\ell,a_1\}+\min\{\ell,a_2\}\ge\min\{\ell,a'_1\}+\min\{\ell,a_2'\}.
\end{eqnarray*}
\end{cl}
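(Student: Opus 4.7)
The plan is to reduce to a convenient ordering, recognize the statement as a concavity/majorization phenomenon, and then verify it by a short case analysis on the position of $\ell$.

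First I would note that the inequality and hypotheses are symmetric in $(a_1,a_2)$, so we may assume $a_1\le a_2$. The assumption $\min\{a_1,a_2\}\ge a_2'$ then gives $a_2'\le a_1$, and combined with $a_1+a_2=a_1'+a_2'$ it forces $a_1'=a_1+a_2-a_2'\ge a_2$. Hence we arrive at the clean chain
\[
a_2'\ \le\ a_1\ \le\ a_2\ \le\ a_1',\qquad a_1+a_2=a_1'+a_2'.
\]
In other words, the sorted pair $(a_1',a_2')$ \emph{majorizes} the sorted pair $(a_2,a_1)$. Since $x\mapsto\min\{\ell,x\}$ is a concave function of $x$, the sum $\min\{\ell,a_1\}+\min\{\ell,a_2\}$ is Schur-concave, which yields the desired inequality. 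This observation is the conceptual content of the claim.

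For a self-contained argument I would just carry out a case analysis on where $\ell$ sits in the chain $a_2'\le a_1\le a_2\le a_1'$. Writing $L$ for the left-hand side and $R$ for the right-hand side:
\begin{itemize}
\item If $\ell\ge a_1'$, every $\min$ strips, so $L=a_1+a_2=a_1'+a_2'=R$.
\item If $\ell\le a_2'$, every $\min$ collapses to $\ell$, so $L=2\ell=R$.
\item If $a_2'\le\ell\le a_1$, then $L=2\ell$ while $R=\ell+a_2'\le 2\ell$.
\item If $a_1\le\ell\le a_2$, then $L=a_1+\ell$ and $R=\ell+a_2'$, and $L-R=a_1-a_2'\ge 0$.
\item If $a_2\le\ell\le a_1'$, then $L=a_1+a_2$ and $R=\ell+a_2'$, so $L-R=(a_1+a_2)-(\ell+a_2')=a_1'-\ell\ge 0$.
\end{itemize}
In every case $L\ge R$, which proves the claim.

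There is no real obstacle: the only thing to be careful about is producing the right ordering from the hypothesis $\min\{a_1,a_2\}\ge a_2'$, after which the case split is mechanical. If one prefers to avoid cases, one can alternatively argue by a single ``smoothing'' step, replacing $(a_1',a_2')$ by $(a_1'-1,a_2'+1)$ repeatedly until it becomes $(a_2,a_1)$ and checking that each step can only increase the sum $\min\{\ell,\cdot\}+\min\{\ell,\cdot\}$; this is essentially the concavity argument above in discrete form.
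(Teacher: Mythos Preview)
Your proof is correct and follows essentially the same approach as the paper: reduce by symmetry to the chain $a_2'\le a_1\le a_2\le a_1'$ (the paper writes it as $a_1'\ge a_1\ge a_2\ge a_2'$, swapping the roles of $a_1,a_2$), then do a case analysis on the position of $\ell$. The paper is slightly more economical, splitting only into the two cases $\ell\ge a_1$ and $\ell\le a_1$ (with their labeling, $a_1$ being the second largest), whereas you split into five; your added majorization/Schur-concavity remark is a nice conceptual gloss not present in the paper.
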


\begin{proof}
We may assume without loss of generality that $a_1'\ge a_1\ge a_2\ge a_2'.$ 

If $\ell\ge a_1$, then $\min\{\ell,a_1\}+\min\{\ell,a_2\}=a_1+a_2=a_1'+a_2'\ge \min\{\ell,a'_1\}+\min\{\ell,a_2'\}.$  

If $a_1\ge\ell$, then $\min\{\ell,a_1\}+\min\{\ell,a_2\}\ge\ell+\min\{\ell,a'_2\}=\min\{\ell,a'_1\}+\min\{\ell,a_2'\}.$
\end{proof}

We need the submodularity of $e_{\mathcal{E}}$ on partitions.

\begin{Lemma}\label{submodeep}
	Let $\mathcal{G} = (V, \mathcal{E})$  be a hypergraph and $\mathcal{P}_1$ and $\mathcal{P}_2$  partitions of $V$. The following holds
\begin{equation}\label{submodeepeq}
	e_{\mathcal{E}}(\mathcal{P}_1) + e_{\mathcal{E}}(\mathcal{P}_2) \ge e_{\mathcal{E}}(\mathcal{P}_1 \sqcap \mathcal{P}_2) + e_{\mathcal{E}}(\mathcal{P}_1 \sqcup \mathcal{P}_2).
\end{equation}
\end{Lemma}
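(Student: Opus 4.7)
The plan is to prove \eqref{submodeepeq} pointwise in the hyperedges. For a partition $\mathcal{P}$ of $V$ and a hyperedge $X$, let $\chi(X, \mathcal{P}) \in \{0, 1\}$ be the indicator that $X$ enters some member of $\mathcal{P}$; equivalently, $\chi(X, \mathcal{P}) = 0$ iff $X$ is contained in a single part of $\mathcal{P}$. Since $e_{\mathcal{E}}(\mathcal{P}) = \sum_{X \in \mathcal{E}} \chi(X, \mathcal{P})$, it suffices to verify
$$\chi(X, \mathcal{P}_1) + \chi(X, \mathcal{P}_2) \ge \chi(X, \mathcal{P}_1 \sqcap \mathcal{P}_2) + \chi(X, \mathcal{P}_1 \sqcup \mathcal{P}_2)$$
for every $X \in \mathcal{E}$, and then sum over $X$.

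The first step is to record two monotonicity facts extracted from \eqref{kvkjvk1} and \eqref{kvkjvk2}. By \eqref{kvkjvk1}, every part of $\mathcal{P}_1 \sqcap \mathcal{P}_2$ has the form $U_1 \cap U_2$ with $U_i \in \mathcal{P}_i$, so $\mathcal{P}_1 \sqcap \mathcal{P}_2$ refines each $\mathcal{P}_i$, whence $\chi(X, \mathcal{P}_1 \sqcap \mathcal{P}_2) \ge \chi(X, \mathcal{P}_i)$. Dually, \eqref{kvkjvk2} says each part of $\mathcal{P}_i$ is contained in a part of $\mathcal{P}_1 \sqcup \mathcal{P}_2$, so $\mathcal{P}_1 \sqcup \mathcal{P}_2$ coarsens each $\mathcal{P}_i$ and $\chi(X, \mathcal{P}_1 \sqcup \mathcal{P}_2) \le \chi(X, \mathcal{P}_i)$. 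A short case analysis on $(\chi(X, \mathcal{P}_1), \chi(X, \mathcal{P}_2)) \in \{0,1\}^2$ shows that the pointwise inequality holds automatically whenever at least one of $\chi(X, \mathcal{P}_1), \chi(X, \mathcal{P}_2)$ equals $1$: in those cases the coarsening bound gives $\chi(X, \mathcal{P}_1 \sqcup \mathcal{P}_2) \le \min\{\chi(X, \mathcal{P}_1), \chi(X, \mathcal{P}_2)\}$, and the trivial bound $\chi(X, \mathcal{P}_1 \sqcap \mathcal{P}_2) \le 1$ suffices.

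The sole remaining obstacle is therefore the case $\chi(X, \mathcal{P}_1) = \chi(X, \mathcal{P}_2) = 0$, where I need to show that $\chi(X, \mathcal{P}_1 \sqcap \mathcal{P}_2) = 0$ as well. Pick $U_i \in \mathcal{P}_i$ with $X \subseteq U_i$, fix any $u \in X$, and let $W$ be the part of $\mathcal{P}_1 \sqcap \mathcal{P}_2$ containing $u$. By \eqref{kvkjvk1}, $W = V_1 \cap V_2$ for some $V_i \in \mathcal{P}_i$; since $u \in V_1 \cap U_1$ and $\mathcal{P}_1$ is a partition, $V_1 = U_1$, and analogously $V_2 = U_2$. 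Hence $X \subseteq U_1 \cap U_2 = W$, so $\chi(X, \mathcal{P}_1 \sqcap \mathcal{P}_2) = 0$, and the coarsening bound also gives $\chi(X, \mathcal{P}_1 \sqcup \mathcal{P}_2) = 0$. This completes the pointwise inequality, and summation over $X \in \mathcal{E}$ yields \eqref{submodeepeq}. The only mildly delicate ingredient is the rigidity observation that every nonempty intersection $U_1 \cap U_2$ with $U_i \in \mathcal{P}_i$ is itself a part of $\mathcal{P}_1 \sqcap \mathcal{P}_2$, which is exactly what \eqref{kvkjvk1} combined with the partition property delivers.
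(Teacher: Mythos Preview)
Your proof is correct and follows essentially the same approach as the paper: both argue pointwise in the hyperedges, using that $\mathcal{P}_1 \sqcup \mathcal{P}_2$ coarsens each $\mathcal{P}_i$ (the paper's Claim~\ref{lkbzlkbl1}(a)) and that a hyperedge lying in a single part of both $\mathcal{P}_1$ and $\mathcal{P}_2$ lies in a single part of $\mathcal{P}_1 \sqcap \mathcal{P}_2$ (the contrapositive of Claim~\ref{lkbzlkbl1}(b)). The only cosmetic difference is that the paper organizes the case split according to the value of $e_{\{X\}}(\mathcal{P}_1 \sqcup \mathcal{P}_2)$, whereas you split on the pair $(\chi(X,\mathcal{P}_1),\chi(X,\mathcal{P}_2))$.
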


\begin{proof}
The result will easily follow  from the following claim.	

\begin{cl}\label{lkbzlkbl1} 
The following hold for all $X \in \mathcal{E}$.

(a) If $X$ crosses $\mathcal{P}_1 \sqcup \mathcal{P}_2$ then it crosses both $\mathcal{P}_1$ and $\mathcal{P}_2$.

(b) If $X$ crosses $\mathcal{P}_1 \sqcap \mathcal{P}_2$ then it crosses either $\mathcal{P}_1$ or $\mathcal{P}_2$. 
\end{cl}

\begin{proof}
(a) Suppose that $X$ leaves $Y \in \mathcal{P}_1 \sqcup \mathcal{P}_2$, that is, $\emptyset \neq X \cap Y \neq X$. Let $U \in \mathcal{P}_1$ be such that $X \cap Y \cap U \neq \emptyset$ ($U$ exists because $\mathcal{P}_1$ is a partition of $V$). By \eqref{kvkjvk2} and $Y \cap U \neq \emptyset$, we have $U \subseteq Y$, thus $X$ also leaves $U$, that is, $X$ crosses $\mathcal{P}_1$. The same holds for $\mathcal{P}_2$.

(b) Suppose that $X$ leaves $Y \in \mathcal{P}_1 \sqcap \mathcal{P}_2$. Since $\mathcal{P}_1$ and $\mathcal{P}_2$ are partitions of $V$, there exist $U_1 \in \mathcal{P}_1$ and $U_2 \in \mathcal{P}_2$ such that $U_1 \cap U_2 \cap X \cap Y \neq \emptyset$.
By \eqref{kvkjvk1}, we have $U_1 \cap U_2 \subseteq Y$ which implies that $X$ leaves $U_1 \cap U_2$ and thus leaves either $U_1$ or $U_2$, that is, $X$ crosses either $\mathcal{P}_1$ or $\mathcal{P}_2$.
\end{proof}

For any $X \in \mathcal{E}$, if $e_{\{X\}}(\mathcal{P}_1 \sqcup \mathcal{P}_2) = 1$ then, by Claim \ref{lkbzlkbl1}(a), we have $e_{\{X\}}(\mathcal{P}_1) + e_{\{X\}}(\mathcal{P}_2) = 2 \ge e_{\{X\}}(\mathcal{P}_1 \sqcap \mathcal{P}_2) + e_{\{X\}}(\mathcal{P}_1 \sqcup \mathcal{P}_2)$. If $e_{\{X\}}(\mathcal{P}_1 \sqcup \mathcal{P}_2) = 0$ then, by Claim \ref{lkbzlkbl1}(b), we have $e_{\{X\}}(\mathcal{P}_1) + e_{\{X\}}(\mathcal{P}_2) \ge e_{\{X\}}(\mathcal{P}_1 \sqcap \mathcal{P}_2) = e_{\{X\}}(\mathcal{P}_1 \sqcap \mathcal{P}_2) + e_{\{X\}}(\mathcal{P}_1 \sqcup \mathcal{P}_2)$.
Hence \eqref{submodeepeq} follows.
\end{proof}

We need the following negative result on packing trees.

\begin{thm}[Kirkpatrick, Hell \cite{kirkpat}]\label{jbejvejhfvefvoidouevou}
Let $G = (V, E)$ be a graph  and $h, k, \ell \in \mathbb Z_{>0}$ with $\ell\ge 2$. It is NP-complete to decide whether there  exists an $h$-regular packing of $k$ trees in $G$ each containing $\ell$ edges, even for $h=1$ and $\ell = 2$.
\end{thm}

From this we derive the same results for forests and branchings. They justify why we do not consider in our packing problems the number of edges  and arcs.

\begin{thm}\label{hkbrell}
Let $D$ be a digraph, $G$ a graph, $h, k\in\mathbb Z_{>0}$, and $\ell: \mathbb{Z}_k \rightarrow  \mathbb{Z}_{>0}$.
\begin{enumerate}[label=(\alph*)]
	\item It is NP-complete to decide whether there exists an $h$-regular packing of $k$ forests in $G$  containing $\ell(1),\dots,\ell(k)$ edges, even for $h = 1$ and $\ell(i) = 2$ for every $1\le i\le k.$\label{npcthmb}
	\item It is NP-complete to decide whether there exists an $h$-regular packing of $k$ branchings in $D$  containing $\ell(1),\dots,\ell(k)$ arcs, even for $h = 1$ and $\ell(i) = 2$ for every $1\le i\le k.$\label{npcthma}
\end{enumerate}
\end{thm}

\begin{proof}
\ref{npcthmb} follows from Theorem~\ref{jbejvejhfvefvoidouevou}. Indeed, for a graph $G$, there exists a $1$-regular packing of $\frac{|V|}{3}$ forests in $G$ each containing $2$ edges if and only there exists a $1$-regular packing of $\frac{|V|}{3}$ trees in $G$ each containing $2$ edges.

\ref{npcthma} follows from \ref{npcthmb}.
First, the problem of finding an $h$-regular packing of $k$ branchings each containing $2$ arcs in a digraph is clearly in NP.
Now we show that it is NP-hard, by performing a reduction from the problem of \ref{npcthmb}.
Indeed, from a graph $G=(V,E)$, we can construct a digraph $D=(V,A)$  by adding arcs $uv$ and $vu$ to $A$ for every $uv \in E$. Clearly, the size of $D$ is polynomial in the size of $G$. Finally, there exists a $1$-regular packing of $\frac{|V|}{3}$ branchings each containing $2$ arcs in $D$ if and only there exists a $1$-regular packing of $\frac{|V|}{3}$ forests each containing $2$ edges in $G$.
\end{proof}

\medskip

A possible extension of an $h$-regular packing, would be to take $h$ to be a function $h : V \rightarrow \mathbb{Z}_{\ge 0}$ and ask for each $v \in V$ to be in exactly $h(v)$ subgraphs of the packing (when $h$ is constant, this reduces to the definition we gave earlier). The following negative results show that, with this extension, even the simplest problems become NP-complete.

\begin{thm}\label{funcregnpcomplete}
	Let $D=(V,A)$ be a digraph, $G=(V,E)$ a graph, $k \in \mathbb Z_{>0}$ and $h : V \rightarrow \mathbb{Z}_{\ge 0}$.
	\begin{enumerate}[label=(\alph*)]
		\item It is NP-complete to decide whether there exists a packing of $k$ arborescences in $D$ such that every $v \in V$ belongs to exactly $h(v)$ arborescences of the packing, even for $h : V \rightarrow \{1, 2\}$.\label{funcregd}
		\item It is NP-complete to decide whether there exists a packing of $k$ trees in $G$ such that every $v \in V$ belongs to exactly $h(v)$ trees of the packing, even for $h : V \rightarrow \{1, 2\}$.\label{funcregund}
	\end{enumerate}
\end{thm}

\begin{proof} \ref{funcregd} We call {\sc Frpa} the problem of deciding wether an instance $(D, h, k)$ admits a packing of $k$ arborescences such that every $v \in V$ belongs to exactly $h(v)$ arborescences of the packing. 
This decision problem is clearly in NP. We give a reduction from Monotone Not-all-equal-3SAT ({\sc Mnae3sat}). Given a set $X$ of boolean variables and a formula consisting of a set $\mathcal{C}$ of clauses each containing 3 distinct variables, none of which are negated, it is NP-complete to decide whether  there exists a truth assignment to the variables of $X$ such that every clause in $\mathcal{C}$ contains at least one true and at least one false literal, Schaefer \cite{schaefer}.

Let $(\bm{X}, \bm{\mathcal{C}})$ be an instance of {\sc Mnae3sat}. We define an instance of {\sc Frpa} as follows. Let $\bm{D} =(V, A)$ be the digraph where $\bm{V} = V_X \cup V_{\mathcal{C}} \cup \{s, v_T, v_F\}$, $\bm{V_X} = \{v_{x}: x\in X\}, \bm{V_{\mathcal{C}}} = \{v_{C}: C\in {\mathcal{C}}\}, \bm{A} = A_1 \cup A_2 \cup A_3, \bm{A_1} = \{v_T v_x, v_F v_x: x \in X\}, \bm{A_2} = \{v_x v_C,v_y v_C,v_z v_C: x \vee y \vee z = C \in \mathcal{C}\}$ and $\bm{A_3} = \{sv_T, sv_F\}$. Let $\bm{h}: V \rightarrow \mathbb Z_{\ge 0}$ be the following function: $h(v_x) = 1$ for every $x \in X$, $h(v_C) = 2$ for every $C \in \mathcal{C}$, $h(v_T) = h(v_F) = 1$, $h(s) = 2$ and $k = 2$. 
Note that the size of $(D,h,k)$ is clearly polynomial in the size of $(X,\mathcal{C}).$
We now show that $(X,\mathcal{C})$ is a positive instance of {\sc Mnae3sat} if and only if $(D, h, k)$ is a positive instance of {\sc Frpa}.
		
Let us take a truth assignment $\varphi$ to the variables of $X$ such that every clause in $\mathcal{C}$ contains at least one true and at least one false literal. To each $C\in\mathcal{C}$ we can hence choose a variable {\boldmath$x_C^T$} in $C$ such that $\varphi(x_C^T) = \text{true}$ and a variable {\boldmath$x_C^F$} in $C$ such that $\varphi(x_C^F) = \text{false}$. We construct the two required arborescences in $D$ as follows: 
$$\bm{B_T} = (V_{\mathcal{C}}\cup \{v_x: \varphi(x) = \text{true}\} \cup v_T \cup s, sv_T \cup \{v_T v_x: \varphi(x) = \text{true}\} \cup \{v_{x_C^T} v_C: C \in \mathcal{C}\}),$$  
$$\bm{B_F} = (V_{\mathcal{C}}\cup \{v_x: \varphi(x) = \text{false}\} \cup v_F\cup s, sv_F\cup\{v_Fv_x:\varphi(x) = \text{false}\}\cup\{v_{x_C^F}v_C:C\in\mathcal{C}\}).$$ 
It is clear that $B_T$ and $B_F$ are arc-disjoint $s$-arborescences. Moreover, by construction, each vertex $v$ of $D$ belongs to $h(v)$ of them. Therefore $(D, h, k)$ is a positive instance of {\sc Frpa}.
		
Let us now take a packing of two arborescences in $D$ such that every vertex $v\in V$ belongs to exactly $h(v)$ of them. Since $d_{A}^+(s) = 2 = h(v_C)$ for any $C \in \mathcal{C}$ and $d_A^-(s) = 0$, the packing consists of two $s$-arborescences and, $sv_T$ and $sv_F$ do not belong to the same arborescence. We denote these arborescences by $B_T$ and $B_F$, respectively. Since $h(v_x) = 1$ for every $x \in X$, $v_x$ belongs to $B_T$ or $B_F$. We hence define a truth assignment $\varphi$ as follows: let $\varphi(x)$ be true if and only if $v_x \in V(B_T)$. Since $h(v_C) = 2$ for every $C \in \mathcal{C}$, $v_C$ belongs to $B_T$ and $B_F$, so $C$ contains at least one true and at least one false literal. Then $(X, \mathcal{C})$ is a positive instance of {\sc Mnae3sat}.
This concludes the proof of \ref{funcregd}. 
\medskip

One can apply a  similar reduction to show \ref{funcregund}.
\end{proof}

Finally, we also need the following two results. The first one can be obtained by combining Theorems 16 and 17 in \cite{BF3}, while the second one  is a special case of Corollary 1 of \cite{FKLSzT}.

\begin{thm}[B\'erczi, Frank \cite{BF3}]\label{BFBG}
Let $S$ and $T$ be disjoint sets, $p$  a positively intersecting supermodular set function on $T$,
  $f,g: T\cup S\rightarrow \mathbb{Z}_{\ge 0}$ and $\alpha, \beta\in\mathbb{Z}_{\ge 0}$ such that $f\le g$ and $\alpha\le\beta.$
There exists a simple bipartite graph $G=(S,T,E)$ such that 
\begin{eqnarray*}
|\Gamma_E(Y)|		&	\ge 	&	p(Y) \hskip .2truecm \text{ for every } Y\subseteq T,\\
f(v)\hskip .2truecm \le \hskip .45truecm d_E(v)\hskip .15truecm 		&	\le 	&	g(v) \hskip .3truecm \text{ for every } v\in T\cup S,\\
\alpha\hskip .2truecm \le\hskip .6truecm  |E|\hskip .4truecm 		&	\le	&	\beta
\end{eqnarray*}
if and only if  for all $X\subseteq S, Y\subseteq  T$ and subpartition $\mathcal{P}$ of $T-Y,$ we have
\begin{eqnarray}
f(Y)-|X||Y|+\sum_{P\in\mathcal{P}}p(P)-|X||\mathcal{P}|			&	\le	&	g(S-X),\label{41}\\
f(X)-|X||Y|+\sum_{P\in\mathcal{P}}p(P)-|X||\mathcal{P}|			&	\le	&	g(T-Y),\label{42}\\
\alpha-|X||Y|+\sum_{P\in\mathcal{P}}p(P)-|X||\mathcal{P}|			&	\le	&	g((S-X)\cup(T-Y)),\label{50}\\
f(X\cup Y)-|X||Y|+\sum_{P\in\mathcal{P}}p(P)-|X||\mathcal{P}|		&	\le	&	\beta.\label{51}
\end{eqnarray}

\end{thm}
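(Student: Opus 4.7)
The plan is to verify necessity of \eqref{41}--\eqref{51} directly, and then obtain sufficiency by combining Theorems 16 and 17 of \cite{BF3} as indicated in the statement.

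For necessity, I would fix a feasible bipartite graph $G=(S,T,E)$ together with arbitrary $X\subseteq S$, $Y\subseteq T$, and a subpartition $\mathcal{P}$ of $T-Y$, and count the edges incident to $S-X$. The neighborhood condition $|\Gamma_E(P)|\ge p(P)$ for each $P\in\mathcal{P}$ gives at least $p(P)-|X|$ neighbors of $P$ in $S-X$; since each such neighbor contributes at least one edge into $P$ and the members of $\mathcal{P}$ are pairwise disjoint, one obtains at least $\sum_{P\in\mathcal{P}} p(P) - |X||\mathcal{P}|$ edges from $S-X$ into $\bigcup \mathcal{P}$. The lower-degree bound $d_E(y)\ge f(y)$ on $y\in Y$ combined with simplicity (which bounds the edges from $X$ to $Y$ by $|X||Y|$) yields at least $f(Y)-|X||Y|$ edges from $S-X$ to $Y$. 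Since $Y$ and $\bigcup\mathcal{P}$ are disjoint, the total is bounded above by $d_E(S-X) \le g(S-X)$, giving \eqref{41}. Inequality \eqref{42} follows symmetrically by counting edges incident to $T-Y$ instead of to $S-X$. Replacing the left-hand lower bound by the global $\alpha$ yields \eqref{50}, and bounding the total number of edges by $\beta$ gives \eqref{51}.

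For sufficiency, the plan is to invoke the two results of \cite{BF3} cited in the statement. Theorem 16 of \cite{BF3} is the characterization of simple bipartite graphs satisfying the degree bounds together with the supermodular neighborhood covering, phrased through conditions of the form \eqref{41}--\eqref{42}. Theorem 17 of \cite{BF3} then augments this framework with the global size bounds $\alpha\le|E|\le\beta$, producing the additional conditions \eqref{50}--\eqref{51}. Composing the two results in the natural way, after rewriting the outputs in the common subpartition form used here, yields the present statement.

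The main obstacle, I expect, lies in the uncrossing machinery hidden inside Theorem 16 of \cite{BF3}: a violation of the covering condition $|\Gamma_E(Y')|\ge p(Y')$ at an arbitrary $Y'\subseteq T$ must be transformed into a violation indexed by a subpartition $\mathcal{P}$ of some $T-Y$, leveraging the intersecting supermodularity of $p$ and the operations $\sqcap,\sqcup$ from Section \ref{def} (via properties \eqref{kvkjvk1}--\eqref{usecl1} together with Lemma \ref{submodeep}). The analogous uncrossing must be executed again when the global size bounds are added, in order to guarantee that the four families of inequalities \eqref{41}--\eqref{51} are genuinely sufficient rather than merely necessary.
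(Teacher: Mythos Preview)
Your proposal is essentially aligned with the paper's treatment: the paper does not prove Theorem~\ref{BFBG} at all but simply records that it ``can be obtained by combining Theorems 16 and 17 in \cite{BF3}'', which is precisely your sufficiency plan; your added necessity sketch is correct and a welcome supplement. One small remark: the uncrossing you anticipate inside \cite{BF3} is the standard set-level uncrossing for intersecting supermodular functions on $T$, not the partition operations $\sqcap,\sqcup$ and Lemma~\ref{submodeep} of the present paper (those are tailored to partitions of $V$ for the hypergraph trimming argument), so that reference is slightly off-target, though it does not affect the validity of your plan.
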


\begin{thm}[Fortier et al. \cite{FKLSzT}]\label{blieioeibiuzg}
Let $\mathcal{D}=(V,\mathcal{A})$ be a dypergraph,  ${\cal S}$  a family of subsets of $V$ and $h\in\mathbb Z_{>0}$. 
There exists an $h$-regular packing of $S$-hyperbranching $(S\in {\cal S})$ in $\mathcal{D}$  if and only if 
	\begin{eqnarray*}  
		|{\cal S}_X|+d^-_\mathcal{A}(X)  	&	\geq 	&	h \hskip .5truecm \text{ for every  $\emptyset\neq X\subseteq V,$}\\
		|{\cal S}_v|			&	\le	&	h \hskip .5truecm \text{ for every } v\in V.
	\end{eqnarray*}
\end{thm}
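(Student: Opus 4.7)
The plan is twofold: establish necessity by a direct counting argument concentrated at a single vertex, and obtain sufficiency by specializing Corollary 1 of \cite{FKLSzT}, of which Theorem \ref{blieioeibiuzg} is announced as a particular case.

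For necessity, I would fix $\emptyset\neq X\subseteq V$ and pick any $v\in X$. By $h$-regularity, $v$ belongs to exactly $h$ cores, hence there are $h$ distinct members $S_1,\dots,S_h$ of $\mathcal{S}$ whose hyperbranchings $\mathcal{B}_{S_i}$ contain $v$ in their core. For each $i$, either $S_i\cap X\neq\emptyset$, so $S_i$ contributes to $|\mathcal{S}_X|$, or $S_i\cap X=\emptyset$, in which case any trimming of $\mathcal{B}_{S_i}$ yields a directed path from $S_i$ to $v$ that must enter $X$; the trimmed arc that crosses into $X$ comes from a hyperarc of $\mathcal{B}_{S_i}$ entering $X$. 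Since the packing is hyperarc-disjoint, these hyperarcs are distinct elements of $\mathcal{A}$ entering $X$, so their number is at most $d^-_\mathcal{A}(X)$. Adding the two tallies gives $|\mathcal{S}_X|+d^-_\mathcal{A}(X)\ge h$. The second condition $|\mathcal{S}_v|\le h$ is immediate: each $S\in\mathcal{S}$ containing $v$ forces $v$ into the core of $\mathcal{B}_S$, and $v$ lies in at most $h$ cores.

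For sufficiency, the statement matches the general shape of Corollary 1 of \cite{FKLSzT}, which characterises regular packings of hyperbranchings in a dypergraph when root sets are prescribed by a given family, in terms of in-degree-plus-root inequalities of exactly the form appearing here. I would translate the parameters of that corollary to the present uniform setting (regularity $h$, with no further degree or size bounds) and check that every inequality appearing there either collapses trivially or reduces to $|\mathcal{S}_X|+d^-_\mathcal{A}(X)\ge h$ together with $|\mathcal{S}_v|\le h$.

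The main obstacle, were one to prove sufficiency from scratch rather than quote the general result, is the simultaneous construction of the cores $U_S\supseteq S$ together with hyperarc-disjoint $S$-hyperbranchings on them, since regularity couples the choices across different $S$. The standard workaround, and the one used in \cite{FKLSzT}, is to trim every hyperarc of $\mathcal{A}$ to an arc, reduce to the branching version of the theorem, and then verify that the arc-version of the same two inequalities can be maintained; this amounts to a supermodular / matroid-partition argument, which would only duplicate the work already carried out there.
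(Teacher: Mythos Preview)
The paper does not prove this statement at all: it is listed among the preliminary results and introduced explicitly as ``a special case of Corollary 1 of \cite{FKLSzT}''. Your proposal is consistent with this --- you defer sufficiency to that same corollary, and you add a short direct necessity argument (which the paper omits entirely). The necessity argument you give is correct and standard: for $v\in X$, the $h$ hyperbranchings whose core contains $v$ split into those whose root set meets $X$ (each a distinct member of $\mathcal{S}_X$) and those whose root set misses $X$ (each contributing a distinct hyperarc entering $X$), yielding the first inequality; the second is immediate since every $S\ni v$ forces $v$ into a core.
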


\section{Prior results}\label{knownresults}

The problems of packing trees and packing arborescences have been studied for a long time in different scopes. The first results motivating this paper are due to Nash-Williams~\cite{NW}, Tutte~\cite{Tu} and Frank~\cite{FA78}. In their papers of 1961, Nash-Williams~\cite{NW} and Tutte~\cite{Tu} simultaneously showed the following result about packing spanning trees in undirected graphs.

\begin{thm}[Nash-Williams~\cite{NW}, Tutte~\cite{Tu}]\label{tuttethmrem} 
Let $G=(V,E)$ be a graph and $k\in \mathbb Z_{>0}.$
There exists a packing of $k$ spanning trees in $G$  if and only if  
	\begin{alignat*}{2}
		e_E(\mathcal{P}) \ge k(|\mathcal{P}| - 1) \qquad \text{for every partition } \mathcal{P} \text{ of } V.
	\end{alignat*}  
\end{thm}

Almost two decades later, Frank~\cite{FA78} showed the directed counterpart of Theorem~\ref{tuttethmrem}.

\begin{thm}[Frank~\cite{FA78}]\label{frankarborescences} 
Let $D = (V, A)$ be a digraph and $k \in \mathbb{Z}_{>0}$.
There exists a packing of $k$  spanning arborescences in $D$ if and only if
\begin{alignat*}{2}
e_A(\mathcal{P}) \ge k(|\mathcal{P}| - 1) \qquad \text{for every subpartition } \mathcal{P} \text{ of } V.
\end{alignat*}
\end{thm}
One can see that the conditions of both theorems are very similar. It is nonetheless of importance to note that the condition for undirected graphs is about partitions while the condition for digraphs is about subpartitions.
This small difference is actually present between all the results of this paper on undirected graphs and their counterparts for digraphs, the conditions are the same, but the undirected case is with partitions and the directed with subpartitions. 
\medskip

More recently, these seminal results were generalized to hypergraphs and dypergraphs.

Theorem \ref{tuttethmrem} was generalized to hypergraphs in~\cite{fkk}.

\begin{thm} [Frank, Kir\'aly, Kriesell \cite{fkk}]\label{tuttethmremhyp} 
Let $\mathcal{G}=(V,\mathcal{E})$ be a hypergraph and $k\in \mathbb{Z}_{>0}.$
There exists a packing of $k$ spanning hypertrees in $\mathcal{G}$  if and only if  
	\begin{alignat*}{2}
		e_{\mathcal{E}}(\mathcal{P}) &\ge k(|\mathcal{P}| - 1) \qquad \text{for every partition } \mathcal{P} \text{ of } V.
	\end{alignat*}  
\end{thm}

An extension of Theorem \ref{frankarborescences} to dypergraphs can be obtained from either Fortier et al.  \cite[Corollary 1]{FKLSzT} or H\"orsch and Szigeti \cite[Theorem 8]{HSz5}.

\begin{thm}[\cite{FKLSzT,HSz5}]\label{freehyperarborescences} 
Let $\mathcal{D}=(V,\mathcal{A})$ be a dypergraph and $k\in \mathbb{Z}_{>0}.$
 There exists a packing of   $k$  spanning hyperarborescences in $\mathcal{D}$ if and only if 
 	\begin{alignat*}{2}
		 e_\mathcal{A}(\mathcal{P}) \ge k(|\mathcal{P}| - 1) \qquad \text{for every subpartition } \mathcal{P} \text{ of } V.
	\end{alignat*}
\end{thm}
\medskip

B\'erczi and Frank~\cite{BF3} gave a major generalization of Theorem~\ref{frankarborescences}.

\begin{thm}[B\'erczi, Frank \cite{BF3}]\label{BFmain}
	Let $D = (V, A)$ be a digraph, $k, \alpha, \beta \in \mathbb{Z}_{>0}$, and $\ell, \ell': \mathbb{Z}_k \rightarrow \mathbb{Z}_{>0}$ such that
	\begin{alignat}{3}
		\ell'(\mathbb{Z}_k) &\ge \beta \ge \alpha \ge \ell(\mathbb{Z}_k), \label{totnecessary}\\
		|V| &\ge \ell'(i) \ge \ell(i) \qquad\qquad \text{for every } 1 \le i \le k. \label{indivnecessary}
	\end{alignat}
	There exists an $(\ell, \ell')$-bordered $(\alpha, \beta)$-limited packing of $k$ spanning  branchings in $D$ if and only if
	\begin{alignat}{3}
	\beta - \ell(\mathbb{Z}_k) + \ell_{|\mathcal{P}|}(\mathbb{Z}_k) + e_A(\mathcal{P}) &\ge k|\mathcal{P}| \qquad &&\text{for every subpartition } \mathcal{P} \text{ of } V, \\
	\ell'_{|\mathcal{P}|}(\mathbb{Z}_k) + e_A({\cal P}) &\ge k|\mathcal{P}| \qquad &&\text{for every subpartition } \mathcal{P} \text{ of } V.
	\end{alignat}
\end{thm}

When $\ell(i) = \ell'(i) = 1$ for $i = 1, \dots, k$ and $\alpha = \beta = k$,  Theorem \ref{BFmain} reduces to Theorem \ref{frankarborescences}.

\medskip

In Theorem~\ref{BFmain}, we assume that~\eqref{totnecessary} holds. This assumption is actually not restrictive at all. In fact, as $\ell'(\mathbb{Z}_k)$ is an upper bound on the total number of roots in the packing, adding $\beta > \ell'(\mathbb{Z}_k)$ as a new upper bound would be meaningless. The same reasoning can be applied to $\ell(\mathbb{Z}_k)$ and $\alpha$ which are both lower bounds on the total number of roots in the packing. Finally, it is necessary to have $\beta \ge \alpha$, thus we assume it holds  not to be burdened by the trivial case of $\beta < \alpha$ later.

We also assume that~\eqref{indivnecessary} holds. It is easy to see $|V|$ is a natural upper bound on the number of roots in any branching. This is why assuming that $|V| \ge \ell'(i)$ for every $1 \le i \le k$ is not restrictive at all. Finally, as $\ell'(i) \ge \ell(i)$ is necessary, we may assume it holds without loss of generality.

\medskip

In this paper we generalize Theorem \ref{BFmain} to regular packings in dypergraphs. The main goal of this paper is to provide its undirected counterpart. 

\section{New results}\label{newresults}

The first  contribution of this paper is the following generalization of Theorem~\ref{BFmain}.

\begin{thm}\label{jjjnbbbnajkhypregdir}
Let $\mathcal{D} = (V, \mathcal{A})$ be a dypergraph, $h, k, \alpha, \beta \in \mathbb Z_{>0}$ and $\ell, \ell': \mathbb{Z}_k \rightarrow  \mathbb{Z}_{>0}$ such that~\eqref{totnecessary} and~\eqref{indivnecessary} hold. There exists an $h$-regular $(\ell, \ell')$-bordered $(\alpha, \beta)$-limited packing of $k$  hyperbranchings in $\mathcal{D}$ if and only if
\begin{alignat}{3}
	h|V| &\ge \alpha, \label{hValpha}\\
	\beta - \ell(\mathbb{Z}_k) + \ell_{|\mathcal{P}|}(\mathbb{Z}_k) + e_\mathcal{A}(\mathcal{P}) &\ge h|\mathcal{P}| \qquad &&\text{for every subpartition } \mathcal{P} \text{ of } V, \label{konfeiobfiueguye1mregdir}\\
	\ell'_{|\mathcal{P}|}(\mathbb{Z}_k) + e_\mathcal{A}(\mathcal{P}) &\ge h|\mathcal{P}| \qquad &&\text{for every subpartition } \mathcal{P} \text{ of } V. \label{konfeiobfiueguye2mregdir}
	\end{alignat}
\end{thm}

When $\mathcal{D}$ is a digraph and $h = k$, Theorem \ref{jjjnbbbnajkhypregdir} reduces to Theorem \ref{BFmain}.

Theorem~\ref{jjjnbbbnajkhypregdir} is proved in Subsection \ref{lkdlkcbdlkcdckl2hyper}. This proof uses almost exclusively the tools and technics employed by B\'erczi and Frank in their article~\cite{BF3}.
\medskip

The  main contribution of this article is the undirected counterpart of Theorem~\ref{jjjnbbbnajkhypregdir}. 
In order to prove it, we need to introduce some intermediate theorems.

\begin{thm}\label{kjjhvhgu}
Let $G = (V, E)$ be a graph, $k \in \mathbb{Z}_{>0}$ and $\ell : \mathbb{Z}_k \rightarrow \mathbb{Z}_{>0}$.
 There exists a packing of $k$ spanning forests in $G$ with $\ell(1),\dots,\ell(k)$ connected components if and only if
	\begin{alignat}{3}
		|V| &\ge \ell(i) \qquad &&\text{for every } 1 \le i \le k, \label{bjehvde}\\
		\ell_{|\mathcal{P}|}(\mathbb{Z}_k) + e_E(\mathcal{P}) &\ge k|\mathcal{P}| \qquad &&\text{for every partition } \mathcal{P} \text{ of } V.
	\end{alignat}
\end{thm}

When $\ell(i) = 1$ for  $i = 1,\dots, k$, Theorem \ref{kjjhvhgu} reduces to Theorem \ref{tuttethmrem}.

Actually, Peng, Chen and Koh \cite{PCK} gave a proof of Theorem \ref{kjjhvhgu} when all $\ell(i)$'s are equal.

Theorem~\ref{kjjhvhgu} is a straightforward application of matroid theory. For the sake of completeness, we provide its proof in Subsection~\ref{pojfpohjfop}.

We mention that the directed counterpart of Theorem \ref{kjjhvhgu} is a special case of Theorem \ref{BFmain}, however their natural extension  to mixed graphs does not hold.
\medskip

The following result and its proof come from \cite{hopp}.

\begin{thm}\label{kbjelciueundi}
Let $G = (V, E)$ be a graph, $h, k \in \mathbb{Z}_{>0}$ and  $\ell: \mathbb{Z}_k \rightarrow  \mathbb{Z}_{>0}$.
There exists an $h$-regular packing of $k$ forests in $G$ with $\ell(1), \dots, \ell(k)$ connected components if and only if~\eqref{bjehvde} holds and 
\begin{alignat}{3}
	h|V| &\ge \ell(\mathbb{Z}_k), \label{kjdjhzvdh}\\
	\ell_{|\mathcal{P}|}(\mathbb{Z}_k) + e_{E}(\mathcal{P}) &\ge h|\mathcal{P}| \qquad &&\text{for every partition } \mathcal{P} \text{ of } V. \label{jvjvkjbkhlhio} 
	\end{alignat}
\end{thm}

For $h=k$, Theorem \ref{kbjelciueundi} reduces to Theorem \ref{kjjhvhgu}.
The necessity  of Theorem \ref{kbjelciueundi} follows from the necessity  of its  extension, Theorem \ref{livzdljzvdmz}.
The sufficiency of Theorem \ref{kbjelciueundi} is proved in Subsection \ref{subsecproofhopp}. This is the main proof of the present paper, it allows us to obtain Theorem \ref{livzdljzvdmz}.
\medskip

\begin{thm}\label{livzdljzvdmz}
Let $G = (V, E)$ be a graph, $h, k, \alpha, \beta \in \mathbb{Z}_{>0}$, and $\ell, \ell': \mathbb{Z}_k \rightarrow \mathbb{Z}_{>0}$ such that~\eqref{totnecessary} and~\eqref{indivnecessary} hold.
There exists an $h$-regular $(\ell, \ell')$-bordered $(\alpha, \beta)$-limited packing of $k$ forests in $G$ if and only if~\eqref{hValpha} holds and
\begin{alignat}{3}
	\beta - \ell(\mathbb{Z}_k) + \ell_{|\mathcal{P}|}(\mathbb{Z}_k) + e_E(\mathcal{P}) &\ge h|\mathcal{P}|\qquad &&\text{for every partition } \mathcal{P} \text{ of } V, \label{konfeiobfiueguye1m}\\
	\ell'_{|\mathcal{P}|}(\mathbb{Z}_k) + e_E(\mathcal{P}) &\ge h|\mathcal{P}| \qquad &&\text{for every partition } \mathcal{P} \text{ of } V. \label{konfeiobfiueguye2m}
\end{alignat}
\end{thm}

For $\ell(i)=\ell'(i)$ for  $i=1,\dots, k$, $\alpha = \beta = \ell(\mathbb{Z}_k)$, Theorem \ref{livzdljzvdmz} reduces to Theorem \ref{kbjelciueundi}. 
The necessity  of Theorem \ref{livzdljzvdmz} follows from the necessity  of its hypergraphic version, Theorem \ref{jjjnbbbnajkhypreg}.
The sufficiency of Theorem \ref{livzdljzvdmz} is proved in Subsection \ref{subsecproofregforwithbounds}.
It will follow from  Theorem \ref{kbjelciueundi}. 
\medskip

The main result of this paper is the following undirected counterpart of Theorem \ref{jjjnbbbnajkhypregdir}, and generalizes all the results about undirected graphs and hypergraphs of this paper.

\begin{thm}\label{jjjnbbbnajkhypreg}
Let $\mathcal{G}=(V,\mathcal{E})$ be a hypergraph, $h, k\in \mathbb Z_{>0}$, $\ell,\ell': \mathbb{Z}_k \rightarrow \mathbb{Z}_{>0}$ such that~\eqref{totnecessary} and~\eqref{indivnecessary} hold. There exists an $h$-regular $(\ell, \ell')$-bordered $(\alpha, \beta)$-limited packing of $k$ rooted hyperforests in $\mathcal{G}$ if and only if~\eqref{hValpha} holds and
\begin{alignat}{3}
	\beta - \ell(\mathbb{Z}_k) + \ell_{|\mathcal{P}|}(\mathbb{Z}_k) + e_\mathcal{E}(\mathcal{P}) &\ge h|\mathcal{P}| \qquad &&\text{for every partition } \mathcal{P} \text{ of } V, \label{konfeiobfiueguye1mreg}\\
	\ell'_{|\mathcal{P}|}(\mathbb{Z}_k) + e_\mathcal{E}(\mathcal{P}) &\ge h|\mathcal{P}| \qquad &&\text{for every partition } \mathcal{P} \text{ of } V. \label{konfeiobfiueguye2mreg}
\end{alignat}
\end{thm}

If $\mathcal{G}$ is a graph then Theorem \ref{jjjnbbbnajkhypreg} reduces to Theorem \ref{livzdljzvdmz}.
Theorem \ref{jjjnbbbnajkhypreg} is proved in Subsection \ref{hdfoibdoibio}. It will follow from its graphic version, Theorem \ref{livzdljzvdmz}, by applying the operation trimming. 
\medskip

We mention that for $h = k$, Theorem~\ref{jjjnbbbnajkhypreg} reduces to a theorem that was proved in~\cite{MMSz} by applying the theory of generalized polymatroids.

\section{Proofs}\label{proofs}

In this section we prove our new results, Theorems  \ref{jjjnbbbnajkhypregdir} -- \ref{jjjnbbbnajkhypreg}.

\subsection{Proof of Theorem \ref{jjjnbbbnajkhypregdir}}\label{lkdlkcbdlkcdckl2hyper}

\begin{proof}
To prove the \textbf{necessity}, let $\{B_1, \ldots, B_k\}$ be an $h$-regular $(\ell, \ell')$-bordered $(\alpha, \beta)$-limited packing of $k$ hyperbranchings with respective root sets $\{S_1, \ldots, S_k\}$.  
By definition of $h$-regular, there can be at most $h|V|$ roots in total in the packing, therefore, by~\eqref{limitedcond} we have
\begin{equation*}
	h|V| \ge \sum_{i = 1}^k |S_i| \ge \alpha,
\end{equation*}
thus~\eqref{hValpha} holds.
To prove~\eqref{konfeiobfiueguye1mregdir} and~\eqref{konfeiobfiueguye2mregdir}, let $\mathcal{P}$ be a subpartition of $V$. 

 For $i\in \mathbb{Z}_k,$ we denote by $\mathcal{P}_{i} = \{X \in \mathcal{P}: X$ intersects the core of  $B_i\}$.
 Since for every $X\in\mathcal{P}_{i}$ that contains no root of $B_i$, there exists a hyperarc of $B_i$ that enters $X$, we get that 
\begin{equation}
	e_{\mathcal{A}(B_i)}(\mathcal{P}) \ge |\mathcal{P}_{i}| - |S_i|.\label{jkvsj3}
\end{equation}

Then, by \eqref{jkvsj3}, $\max\{0, a - b\}=a-\min\{a,b\}$ and \eqref{borderedcond}, we have
\begin{eqnarray}
	|S_i|-\ell(i)+e_{\mathcal{A}(B_i)}(\mathcal{P}) &\ge&  \max\{0, |\mathcal{P}_{i}| - \phantom{'}\ell(i)\} 
	\ge |\mathcal{P}_{i}| - \ell_{|\mathcal{P}|}(i),\label{jkvsj2}\\
	e_{\mathcal{A}(B_i)}(\mathcal{P}) &\ge& \max\{0, |\mathcal{P}_{i}| - \ell'(i)\}  \ge |\mathcal{P}_{i}| - \ell'_{|\mathcal{P}|}(i).\label{jkvsj1}
\end{eqnarray}

It follows, by \eqref{jkvsj2}, \eqref{limitedcond}, and \eqref{jkvsj1}, that we have 
\begin{eqnarray}
	e_{\mathcal{A}}(\mathcal{P}) & \ge &\sum_{i = 1}^k e_{\mathcal{A}(B_i)}(\mathcal{P}) \ge \sum_{i = 1}^{k} |\mathcal{P}_{i}| - \ell_{|\mathcal{P}|}(\mathbb{Z}_k)+\ell(\mathbb{Z}_k)-\beta,\label{prfun2'}\\
	e_{\mathcal{A}}(\mathcal{P}) &\ge& \sum_{i = 1}^k e_{\mathcal{A}(B_i)}(\mathcal{P}) \ge \sum_{i = 1}^{k} |\mathcal{P}_{i}| - \ell'_{|\mathcal{P}|}(\mathbb{Z}_k).\label{prfun2}
\end{eqnarray}

Further, we have
\begin{equation}\label{prfun3}
	\sum_{i = 1}^{k} |\mathcal{P}_{i}| = \sum_{i = 1}^{k} \sum_{X \in \mathcal{P}_i} 1 = \sum_{X \in \mathcal{P}} \sum_{\substack{i = 1 \\X \in \mathcal{P}_i}}^{k} 1 \ge \sum_{X \in \mathcal{P}} h = h|\mathcal{P}|.
\end{equation}
Finally, \eqref{prfun2'} and \eqref{prfun3} imply  \eqref{konfeiobfiueguye1mregdir}; \eqref{prfun2} and \eqref{prfun3} imply  \eqref{konfeiobfiueguye2mregdir}.

\medskip

Now let us prove the \textbf{sufficiency}. 
Suppose that~\eqref{hValpha},~\eqref{konfeiobfiueguye1mregdir} and~\eqref{konfeiobfiueguye2mregdir} hold. 
Note that, by \eqref{konfeiobfiueguye2mregdir} applied for  ${\cal P}=\{V\},$ we obtain that $k\ge h.$
Let $T = V$, $S = \{s_1, \dots, s_k\}$ and $p(Y) = h - d^-_\mathcal{A}(Y)$ if $\emptyset \neq Y \subseteq V$ and $0$ if  $Y = \emptyset$. Then $p$ is an intersecting supermodular set function on $T.$
Let $f(v)=0$ and $g(v)=h$ for every $v\in T$ and $f(s_i) = \ell(i), g(s_i)=\ell'(i)$ for every $i \in  \mathbb{Z}_k$. 
Note that, by \eqref{totnecessary} and \eqref{indivnecessary}, we have $\alpha \le \beta$ and $f \le g$.

\begin{cl}\label{equiv}
For all $X\subseteq S, Y\subseteq  T$ and subpartition $\mathcal{P}$ of $T-Y,$ \eqref{41}--\eqref{51} hold.
\end{cl}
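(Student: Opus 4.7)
The plan is to substitute the explicit definitions of $f,g,p,\alpha,\beta$ into each of \eqref{41}--\eqref{51} and verify them one by one, using \eqref{konfeiobfiueguye1mregdir}, \eqref{konfeiobfiueguye2mregdir}, \eqref{ibfieuv}, together with the side hypotheses \eqref{jsjhvjhzv1}, \eqref{jsjhvjhzv2}. Writing $\sum_{P\in\mathcal{P}}p(P)=h|\mathcal{P}|-e_{\mathcal{A}}(\mathcal{P})$, setting $x=|X|$, $y=|Y|$, and $X_K=\{i\in K:s_i\in X\}$ (so $|X_K|=x$), the four conditions become concrete arithmetic inequalities in $\ell$, $\ell'$, $h$, $|V|$, $e_{\mathcal{A}}(\mathcal{P})$, $x$, $y$, and $|\mathcal{P}|$.

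For \eqref{41}, I would bound $h|\mathcal{P}|-e_{\mathcal{A}}(\mathcal{P})\le\ell'_{|\mathcal{P}|}(K)$ using \eqref{konfeiobfiueguye2mregdir} and then split $\ell'_{|\mathcal{P}|}(K)=\ell'_{|\mathcal{P}|}(X_K)+\ell'_{|\mathcal{P}|}(K-X_K)\le x|\mathcal{P}|+\ell'(K-X_K)$, using $\ell'_{|\mathcal{P}|}(i)\le|\mathcal{P}|$ on $X_K$ and $\ell'_{|\mathcal{P}|}(i)\le\ell'(i)$ on $K-X_K$. Symmetrically, \eqref{51} is handled by \eqref{konfeiobfiueguye1mregdir} and the analogous splitting of $\ell_{|\mathcal{P}|}(K)$; after cancelling $\ell'(0)$ and invoking $\ell(K)=\ell(X_K)+\ell(K-X_K)$, one is left with the trivial estimate $\ell_{|\mathcal{P}|}(X_K)\le x|\mathcal{P}|\le x(y+|\mathcal{P}|)$.

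Inequality \eqref{42} simplifies to $\ell(X_K)+(h-x)(y+|\mathcal{P}|)-e_{\mathcal{A}}(\mathcal{P})\le h|V|$, and I would split on the sign of $h-x$. When $x\ge h$, the term $(h-x)(y+|\mathcal{P}|)$ is non-positive and the chain $\ell(X_K)\le\ell(K)\le\ell(0)\le h|V|$ (using \eqref{jsjhvjhzv1} and \eqref{ibfieuv}) closes the case. When $x<h$, the combinatorial fact that $Y$ and $\bigcup\mathcal{P}$ are disjoint subsets of $V$ with $|\mathcal{P}|\le|\bigcup\mathcal{P}|$ gives $y+|\mathcal{P}|\le|V|$; combined with the individual bound $\ell(X_K)\le x|V|$ (from \eqref{jsjhvjhzv2}), this yields $\ell(X_K)+(h-x)(y+|\mathcal{P}|)\le x|V|+(h-x)|V|=h|V|$.

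Finally, \eqref{50} will be deduced from \eqref{42} via the observation $\ell(0)\le\ell(K)=\ell(X_K)+\ell(K-X_K)\le\ell(X_K)+\ell'(K-X_K)$, which follows from \eqref{jsjhvjhzv1} and $\ell\le\ell'$. Substituting this bound on $\ell(0)$ into the LHS of \eqref{50} produces the LHS of \eqref{42} plus an extra $\ell'(K-X_K)$ term that matches the additional summand on the RHS of \eqref{50}. The main obstacle I anticipate is the subcase $x<h$ of \eqref{42}: this is the unique step where \eqref{ibfieuv} alone does not suffice, and the proof must exploit both the individual bound $\ell(i)\le|V|$ and the packing of $Y$ and $\mathcal{P}$ inside $V$.
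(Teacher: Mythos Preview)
Your treatment of \eqref{41}, \eqref{51}, and \eqref{42} is correct and essentially coincides with the paper's argument. However, your derivation of \eqref{50} from \eqref{42} contains a genuine error: you invoke the inequality $\ell(0)\le\ell(K)$, but \eqref{jsjhvjhzv1} asserts the \emph{opposite} direction, namely $\ell'(K)\ge\ell'(0)\ge\ell(0)\ge\ell(K)$. (Indeed, you used the correct direction $\ell(K)\le\ell(0)$ yourself in the $x\ge h$ case of \eqref{42}.) Thus the chain $\ell(0)\le\ell(K)=\ell(X_K)+\ell(K-X_K)\le\ell(X_K)+\ell'(K-X_K)$ fails at its first step, and \eqref{50} cannot be reduced to \eqref{42} in the way you propose.

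The fix is to run the same case split on $x$ versus $h$ directly for \eqref{50}. When $x\ge h$ one uses \eqref{ibfieuv} to get $\ell(0)\le h|V|$, together with $\ell'(K-X_K)\ge 0$. When $x<h$ the correct replacement for your chain is
\[
\ell(0)\le\ell'(K)=\ell'(X_K)+\ell'(K-X_K)\le x|V|+\ell'(K-X_K),
\]
which \emph{does} follow from \eqref{jsjhvjhzv1} and \eqref{jsjhvjhzv2}; combined with $(h-x)(y+|\mathcal{P}|)\le(h-x)|V|$ this yields \eqref{50}. This is precisely how the paper handles \eqref{50'}, treating it in parallel with \eqref{42'} via the bound $|X||V|-\ell'(X)\ge 0$ rather than as a consequence of \eqref{42'}.
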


\begin{proof} Since $f(v)=0$  for every $v\in T$, the claim is equivalent to the following where $Z=T-Y.$
For all $X\subseteq  \mathbb{Z}_k, Z\subseteq  V$ and subpartition $\mathcal{P}$ of $Z,$ we have
\begin{alignat}{3}
h|\mathcal{P}| - e_{\mathcal{A}}(\mathcal{P}) - |X||\mathcal{P}| &\le \ell'(\overline X), \label{41'}\\
\ell(X) + h|\mathcal{P}| - e_\mathcal{A}(\mathcal{P}) - |X||\mathcal{P}| &\le h|Z| + |X||\overline Z|, \label{42'}\\
\alpha + h|\mathcal{P}| - e_{\mathcal{A}}(\mathcal{P}) - |X||\mathcal{P}| &\le \ell'(\overline X) + h|Z| + |X||\overline Z|, \label{50'}\\
\ell(X) + h|\mathcal{P}| - e_\mathcal{A}(\mathcal{P}) - |X||\mathcal{P}| &\le \beta.\label{51'}
\end{alignat}
Now, \eqref{41'} is equivalent to \eqref{konfeiobfiueguye2mregdir} and \eqref{51'} is equivalent to \eqref{konfeiobfiueguye1mregdir}.

\medskip

If $|X|\ge h$ then, by \eqref{hValpha}, $\ell' \ge 0$ and \eqref{totnecessary}, we have 
$h|\mathcal{P}|-e_\mathcal{A}(\mathcal{P})-|X||\mathcal{P}| \le 0 \le h|V| - \alpha \le h|Z| + |X||\overline Z| - \alpha \le h|Z| + |X||\overline Z| - \alpha + \min\{\ell'(\overline X), \alpha - \ell(X)\}$,
so \eqref{42'} and \eqref{50'} follow.
\medskip

If $|X|\le h$ then, by $|Z|\ge|\mathcal{P}|,$ \eqref{totnecessary} and \eqref{indivnecessary}, we have 
$-e_\mathcal{A}(\mathcal{P})\le 0\le (h-|X|)(|Z|-|\mathcal{P}|)+(|X||V|-\ell'(X))+\min\{\ell'( \mathbb{Z}_k)-\alpha,\ell'(X)-\ell(X)\},$
 so \eqref{42'} and \eqref{50'} follow.
\end{proof}

By Claim \ref{equiv}, Theorem \ref{BFBG} can be applied and hence there exists a simple bipartite graph $\bm{G}=(S,T,E)$ such that $|\Gamma_E(Y)|\ge p(Y)$ for every $Y\subseteq V,$ $d_E(v)\le h$ for every $v\in V$, $\ell(i)\le d_E(s_i)\le\ell'(i)$ for every $i\in  \mathbb{Z}_k$ and $\alpha\le |E|\le \beta$.

Let $\bm{S_i}$ be the set of neighbors of $s_i$ in $G$ and $\bm{{\cal S}}=\{S_1,\dots,S_k\}$. Since $|{\cal S}_Y|=|\Gamma_E(Y)|\ge h - d^-_\mathcal{A}(Y)$  for every $\emptyset\neq Y\subseteq V$ and $|{\cal S}_v|=d_E(v) \le h$ for every $v\in V$, we may apply Theorem \ref{blieioeibiuzg} to $\mathcal{D}$.
Therefore there exists an $h$-regular packing of  $k$ $S_i$-hyperbranchings $B_i$ in $\mathcal{D}$.

Furthermore, since $|S_i|=d_E(s_i)$  for every $i\in  \mathbb{Z}_k$ and $\sum_{i=1}^k|S_i|=|E|$, we have that $\ell(i)\le |S_i|\le\ell'(i)$ for every $i\in  \mathbb{Z}_k$ and $\alpha \le \sum_{i=1}^k|S_i| \le \beta$. Therefore, this is an $h$-regular $(\ell, \ell')$-bordered $(\alpha, \beta)$-limited packing of $k$ hyperbranchings in $\mathcal{D}$ which concludes the proof.
\end{proof}

\subsection{Proof of Theorem \ref{kjjhvhgu}}\label{pojfpohjfop}

\begin{proof}
  In this proof we use matroids so we consider  spanning forests as  edge sets.   
  Let {\boldmath ${\sf M}_G$} $=(E,r_G)$ be the graphic matroid of $G=(V,E)$, that is the independent sets of ${\sf M}_G$ are the spanning forests of $G$ and we have 
$r_G(F)= |V| - c(V, F) \text{ for every } F\subseteq E.$ 
For $i = 1, \dots, k$, by \eqref{bjehvde}, we have $|V| - \ell(i)\ge 0$. We can hence define {\boldmath${\sf M}_i$} $= (E, r_i)$ to be the truncated matroid of ${\sf M}_G$ at $|V| - \ell(i)$, that is the independent sets of ${\sf M}_i$ are the spanning forests in $G$ with at most $|V| - \ell(i)$ edges (so at least $\ell(i)$ connected components) and, by the definition of $r_G$, we have 
\begin{eqnarray}\label{jkbfebiu2}
  r_i(F) = \min\{ |V| - c(V, F), |V| - \ell(i)\} = |V| - \max\{c(V, F), \ell(i)\} \text{ for every } F \subseteq E.  
\end{eqnarray}
  Let {\boldmath${\sf M}^*$} $=(E,r^*)$ be the sum of the matroids ${\sf M}_1,\dots, {\sf M}_k$.   The next claim implies  Theorem \ref{kjjhvhgu}.

\begin{cl}\label{ofkbebf}
The  following statements are equivalent.
    \begin{enumerate}[label=(\alph*)]
        \item There exists a packing of $k$ spanning forests in $G$ with $\ell(1), \ldots, \ell(k)$ connected components.\label{item1}
        \item There exists  an independent set $F$ in ${\sf M}^*$ of size $k|V|-\ell(\mathbb{Z}_k).$\label{item2}
        \item $e_E(\mathcal{P}) \ge k|\mathcal{P}| - \ell_{|\mathcal{P}|}(\mathbb{Z}_k)$ for every partition $\mathcal{P}$ of $V$.\label{item4}
    \end{enumerate}
\end{cl}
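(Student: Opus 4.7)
My plan is to invoke matroid theory: implication (a)$\Leftrightarrow$(b) will follow directly from the definition of the sum matroid ${\sf M}^*$, while (b)$\Leftrightarrow$(c) follows by applying the Edmonds--Fulkerson rank formula (Theorem~\ref{partitionthm}) and translating between edge subsets and partitions. For (a)$\Rightarrow$(b), a packing $\{F_1,\dots,F_k\}$ of spanning forests with $F_i$ having $\ell(i)$ components satisfies $|F_i|=|V|-\ell(i)$, and by~\eqref{jkbfebiu2} each $F_i$ is a maximum independent set of ${\sf M}_i$; edge-disjointness then implies $F=F_1\cup\cdots\cup F_k$ is independent in ${\sf M}^*$ of size $k|V|-\ell(K)$. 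Conversely, any independent $F$ in ${\sf M}^*$ of that size decomposes into $F_1\cup\cdots\cup F_k$ with $F_i$ independent in ${\sf M}_i$; since $|F_i|\le|V|-\ell(i)$ and the totals match, each bound is tight, so $F_i$ is a spanning forest with exactly $\ell(i)$ components.

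For (b)$\Leftrightarrow$(c), I first observe that~\eqref{jkbfebiu2} gives $r^*(E)\le\sum_i r_i(E)\le k|V|-\ell(K)$ unconditionally, so (b) is equivalent to $r^*(E)\ge k|V|-\ell(K)$. Theorem~\ref{partitionthm} turns this into the condition $|E-X|+\sum_i r_i(X)\ge k|V|-\ell(K)$ for every $X\subseteq E$; substituting~\eqref{jkbfebiu2} and using the identity $\max\{c,\ell(i)\}-\ell(i)=c-\ell_c(i)$ yields the equivalent form
\[
|E-X|\;\ge\;k\,c(V,X)-\ell_{c(V,X)}(K)\qquad\text{for every }X\subseteq E.\quad(\star)
\]
If (c) holds and $X\subseteq E$, applying (c) to $\mathcal{P}:=\mathcal{P}(X)$ gives $|E-X|\ge e_E(\mathcal{P})\ge k|\mathcal{P}|-\ell_{|\mathcal{P}|}(K)=k\,c(V,X)-\ell_{c(V,X)}(K)$, since no edge of $X$ crosses $\mathcal{P}$; this is $(\star)$.

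For the converse direction, given a partition $\mathcal{P}$ of $V$ I would take $X:=\bigcup_{P\in\mathcal{P}}E[P]$, so that $|E-X|=e_E(\mathcal{P})$ and $c(V,X)=|\mathcal{P}'|$, where $\mathcal{P}'$ is the refinement of $\mathcal{P}$ into the connected components of the subgraphs $G[P]$. Inequality $(\star)$ then gives $e_E(\mathcal{P})\ge k|\mathcal{P}'|-\ell_{|\mathcal{P}'|}(K)$, and to recover (c) for $\mathcal{P}$ I would invoke the $1$-Lipschitz property of $p\mapsto\ell_p(i)$: since $|\mathcal{P}'|\ge|\mathcal{P}|$ one has $\ell_{|\mathcal{P}'|}(K)-\ell_{|\mathcal{P}|}(K)\le k(|\mathcal{P}'|-|\mathcal{P}|)$, which rearranges to $k|\mathcal{P}'|-\ell_{|\mathcal{P}'|}(K)\ge k|\mathcal{P}|-\ell_{|\mathcal{P}|}(K)$. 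The main obstacle is precisely this refinement step: $(\star)$ naturally controls only partitions whose classes are $G$-connected, and one must verify that coarsening to an arbitrary $\mathcal{P}$ does not weaken the bound, which is what the Lipschitz inequality guarantees.
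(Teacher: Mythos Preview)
Your proof is correct and follows essentially the same route as the paper's: the equivalence (a)$\Leftrightarrow$(b) is argued identically, and for (b)$\Leftrightarrow$(c) both you and the paper reduce via Theorem~\ref{partitionthm} to the condition $|E-X|\ge k\,c(V,X)-\ell_{c(V,X)}(K)$ for all $X\subseteq E$, then pass between this and (c) by taking $X$ to be the edges not crossing $\mathcal{P}$ (or $\mathcal{P}=\mathcal{P}(X)$ in the other direction). Your ``1-Lipschitz'' step is exactly the paper's use of the monotonicity of $c\mapsto\max\{0,c-\ell(i)\}$, just rephrased.
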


\begin{proof}
    {\ref{item1} $\Leftrightarrow$~\ref{item2}}: First suppose  that there exists a packing  of spanning forests $F_1, \ldots, F_k $ in $G$ such that $c(V, F_i) = \ell(i)$ for every $i\in \mathbb{Z}_k.$ Then $F_i$ is  independent  in ${\sf M}_i$   for every $i\in \mathbb{Z}_k$ and hence $F = \bigcup_{i=1}^k F_i$ is  independent  in ${\sf M}^*$. Further, we have $$|F| = \sum_{i = 1}^k |F_i| = \sum_{i = 1}^k (|V| - c(V, F_i))= \sum_{i = 1}^k (|V| - \ell(i))=k|V|-\ell(\mathbb{Z}_k).$$
\smallskip
    
Now suppose  that there exists an independent set $F$ in ${\sf M}^*$ of size $k|V|-\ell(\mathbb{Z}_k)$. Then there exist  pairwise disjoint edge sets $F_1, \ldots, F_k$ such that $\bigcup_{i = 1}^k F_i = F$ and $F_i$ is independent in ${\sf M}_i$ that is $r_i(F_i) = |F_i|$ for every 
$i\in \mathbb{Z}_k.$ It follows, by \eqref{jkbfebiu2}, that  $$k|V|-\ell(\mathbb{Z}_k)=|F| = \sum_{i = 1}^{k}|F_i|= \sum_{i = 1}^{k}r_i(F_i) \le\sum_{i = 1}^{k} (|V| - \ell(i))=k|V|-\ell(\mathbb{Z}_k),$$ so   $r_i(F_i) = |V| - \ell(i)$ for every $i\in \mathbb{Z}_k.$ Hence $F_i$ is  a spanning forest in $G$ with $|V|-|F_i|=\ell(i)$ connected components  for every $i\in \mathbb{Z}_k.$
\medskip

\noindent {\ref{item2} $\Leftrightarrow$~\ref{item4}}: First, \ref{item2} is equivalent to $r^*(E) \ge k|V|-\ell(\mathbb{Z}_k)$. This,  by Theorem \ref{partitionthm}, is equivalent to $|E - F| + \sum_{i = 1}^{k} r_i(F) \ge \sum_{i = 1}^{k} (|V| - \ell(i))$ {for every } $F \subseteq E,$ which, by \eqref{jkbfebiu2}, is equivalent to 
    \begin{eqnarray}
       |E - F|  &\ge &\sum_{i = 1}^{k} \max\{0, c(V, F) - \ell(i)\} \hskip .5truecm\text{for every } F \subseteq E.\hskip 1.7truecm\label{better3}
    \end{eqnarray}
    Secondly, since $-\ell_{|\mathcal{P}|}(i)=\max\{-\ell(i),-|\mathcal{P}|\}$ for $i = 1, \dots, k$, \ref{item4} is equivalent to 
    \begin{eqnarray}
        e_E(\mathcal{P}) &\ge & \sum_{i = 1}^{k} \max\{0, |\mathcal{P}| - \ell(i)\}\hskip 1truecm\text{for every partition } \mathcal{P} \text{ of } V.\label{betterBF2}
    \end{eqnarray}
    We hence need to prove that \eqref{better3} and \eqref{betterBF2} are equivalent.
 
    First suppose  that \eqref{better3} holds. Let $\mathcal{P}$ be a partition of $V$ and $F \subseteq E$ the set of edges that do not cross $\mathcal{P}$. Then $e_E(\mathcal{P})=|E - F|$ and $c(V, F) \ge |\mathcal{P}|$, and hence,  \eqref{better3} implies \eqref{betterBF2}.
 
    Now suppose that \eqref{betterBF2} holds. Let $F \subseteq E$ and $\mathcal{P}=\mathcal{P}(F)$. Then $|E - F|\ge e_E(\mathcal{P})$ and $c(V, F)=|\mathcal{P}|$, and hence, \eqref{betterBF2}  implies \eqref{better3}.
This concludes the proof of the claim.
\end{proof}

As mentioned above Theorem \ref{kjjhvhgu} follows from Claim \ref{ofkbebf}.
\end{proof}

\subsection{Proof of Theorem \ref{kbjelciueundi}}\label{subsecproofhopp}

\begin{proof}
The \textbf{necessity} of this theorem follows from the necessity of Theorem \ref{jjjnbbbnajkhypreg}.

To prove the \textbf{sufficiency}, let us suppose that~\eqref{bjehvde},~\eqref{kjdjhzvdh} and~\eqref{jvjvkjbkhlhio} hold. We may suppose without loss of generality that 
\begin{equation}\label{ellass}
\ell(1) \ge \ldots \ge \ell(k). 
\end{equation}
By \eqref{jvjvkjbkhlhio} applied for $\mathcal{P} = \{V\}$, we get that $k \ge h$. If $k = h$, Theorem \ref{kbjelciueundi} reduces to Theorem \ref{kjjhvhgu}. We hence suppose that $k > h$. For a packing $\mathcal{F}$ of spanning forests, we denote by {\boldmath$c_{\min}(\mathcal{F})$} the minimum number of connected components of a forest in $\mathcal{F}$.

\begin{Lemma}\label{lemmafspan}
There exist an index $i$ and  a packing $\mathcal{F}=\{F_1, \ldots, F_{h}\}$ of $h$ spanning forests in $G$ such that the following hold
\begin{itemize}
 \setlength\itemsep{-0cm}
	\item[(i)] $c(F_j) = \ell(j)$ for $j\in\{1 , \dots, i\}$,
        	\item[(ii)]  $c_{\min}(\mathcal{F}) \ge \ell(i + 1)$,
        	\item[(iii)]  $\sum_{j = i+1}^{h}c(F_j)=\sum_{j = i+1}^{k} \ell(j)$.
\end{itemize}
\end{Lemma}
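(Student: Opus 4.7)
The plan is to reduce to Theorem \ref{kjjhvhgu} by choosing carefully the target component counts $m(1), \dots, m(h)$. First I take $i$ to be the smallest integer $i^* \in \{0, \dots, h-1\}$ satisfying $(h - i^*)\ell(i^* + 1) \le \sum_{j = i^* + 1}^{k} \ell(j)$; this inequality holds trivially at $i = h-1$ (since $k > h$), so $i^*$ is well-defined. Setting $s = \sum_{j = i^* + 1}^{k} \ell(j)$, I define $m(j) = \ell(j)$ for $j \le i^*$ and, for $j > i^*$, choose $m(j) \in \{\lfloor s/(h-i^*)\rfloor, \lceil s/(h-i^*)\rceil\}$ with $\sum_{j = i^* + 1}^{h} m(j) = s$ (the most balanced distribution). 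The defining inequality of $i^*$ gives $\lfloor s/(h-i^*)\rfloor \ge \ell(i^*+1)$, which combined with \eqref{ellass} gives $m(j) \ge \ell(i^*+1)$ for every $j \in \{1, \dots, h\}$.

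Next I would invoke Theorem \ref{kjjhvhgu} on the values $m(1), \dots, m(h)$ to produce spanning forests $F_1,\dots,F_h$ with $c(F_j) = m(j)$. Conditions (i)--(iii) of the lemma then hold with $i = i^*$ essentially by construction: (i) is immediate, (ii) is $\min_j m(j) \ge \ell(i^*+1)$, and (iii) reads $\sum_{j=i^*+1}^h m(j) = s$.

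The real content is verifying the two hypotheses of Theorem \ref{kjjhvhgu}. For the degree bound $m(j) \le |V|$, only the case $j > i^*$ requires work: when $i^* \ge 1$ the failure of the defining inequality at $i = i^* - 1$ reads $(h - i^*)\ell(i^*) > s$, so $\lceil s/(h-i^*)\rceil \le \ell(i^*) \le |V|$ by \eqref{bjehvde}; when $i^* = 0$ it follows from $s = \ell(K) \le h|V|$ via \eqref{kjdjhzvdh}. For the partition inequality, set $q = |\mathcal{P}|$, $L = \lfloor s/(h-i^*)\rfloor$, and $U = \lceil s/(h-i^*)\rceil$, and split on $q$. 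If $q \le L$, then $m(j) \ge q$ for every $j$: for $j > i^*$ this is $m(j) \ge L$, and for $j \le i^*$ (only relevant if $i^* \ge 1$) the same minimality argument yields $\ell(i^*) > L$, hence $\ell(j) \ge \ell(i^*) > L \ge q$. Thus $m_{|\mathcal{P}|}(H) = hq$ and the partition inequality is trivially satisfied. If $q \ge U$, then $q \ge U \ge \ell(i^*+1) \ge \ell(j)$ for each $j > i^*$, and also $q \ge m(j)$ for each $j > i^*$, so $\sum_{j = i^* + 1}^{h} \min\{m(j), q\} = s = \sum_{j = i^* + 1}^{k} \min\{\ell(j), q\}$. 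The terms with $j \le i^*$ coincide because $m(j) = \ell(j)$. Hence $m_{|\mathcal{P}|}(H) = \ell_{|\mathcal{P}|}(K)$, and the partition inequality reduces to \eqref{jvjvkjbkhlhio}. Since the open interval $(L, U)$ contains no integers, every $q \in \mathbb{Z}_{>0}$ is covered by one of the two cases.

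The main obstacle is engineering the inequality chain $\ell(i^*+1) \le L \le U \le \ell(i^*) \le |V|$ (when $i^* \ge 1$), which is what makes the two cases tile $\mathbb{Z}_{>0}$ while also delivering the degree bound. Both endpoints depend delicately on the minimality of $i^*$: the lower one is the defining inequality at $i^*$, and the upper one is its failure at $i^* - 1$. Once this chain is in place, the lemma follows by a direct application of Theorem \ref{kjjhvhgu} together with \eqref{jvjvkjbkhlhio}.
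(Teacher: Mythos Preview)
Your proof is correct and follows essentially the same approach as the paper's: choose a threshold index where a balance inequality flips, set the new targets equal to $\ell(j)$ below the threshold and to an even distribution of the remaining sum above it, then verify the hypotheses of Theorem~\ref{kjjhvhgu} via a two-case split on $|\mathcal{P}|$. The only cosmetic difference is that the paper takes $i'$ to be the \emph{maximum} index with $(h-i')\ell(i') \ge \sum_{j>i'}\ell(j)$ (using the convention $\ell(0)=|V|$), whereas you take $i^*$ to be the \emph{minimum} index with $(h-i^*)\ell(i^*+1) \le \sum_{j>i^*}\ell(j)$; both choices yield the needed sandwich $\ell(i+1)\le L\le U\le \ell(i)$ (the paper's version reads $\ell(i'+1)<\ell^*\le\ell(i')$), and the subsequent verification is the same.
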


\begin{proof}
Let {\boldmath$\ell(0)$} $= |V|.$  Then, by~\eqref{kjdjhzvdh}, the following inequality holds for $i' = 0$.
Let hence $\bm{i'}$ be the maximum integer such that
\begin{equation}\label{uselful}
	(h - i')\ell(i') \ge \sum_{j = i' + 1}^{k}\ell(j).
\end{equation}
By $k>h,$ we have $i'<h$. Then, by the maximality of $i'$, we have $(h - i' - 1)\ell(i' + 1) < \sum_{j = i' + 2}^{k}\ell(j)$ that is
\begin{equation}\label{uselful2}
(h - i')\ell(i' + 1) < \sum_{j = i' + 1}^{k}\ell(j). 
\end{equation}
Let the function {\boldmath$\ell'$} $: \{1, \ldots, h\} \rightarrow \mathbb{Z}_{>0}$ be obtained from the function $\ell$ by evenly distributing $\ell(i' + 1), \ldots, \ell(k)$  over $\ell'(i' + 1), \ldots, \ell'(h)$ that is, for  {\boldmath$\ell^*$} $= \lceil\frac{\sum_{j = i' + 1}^{k}\ell(j)}{h - i'}\rceil,$
\begin{eqnarray}
 	\ell'(j) &=& \ell(j) \hskip 1.47truecm \text{ for } 1 \le j \le i',\label{mkjfeljjk2}\\
	\ell'(j) &\in& \{\ell^*, \ell^* - 1\}  \ \ \ \text{ for }  i' + 1 \le j \le h,\label{mkjfeljjk3}\\
	\sum_{j = i' + 1}^{k} \ell(j) &=& \sum_{j = i' + 1}^{h} \ell'(j).\label{mkjfeljjk}
\end{eqnarray}
 By \eqref{uselful} and \eqref{uselful2}, we have 
\begin{equation}\label{kjvdkjzvdjk}
\ell(i') \ge \ell^* > \ell(i' + 1).
\end{equation}

\begin{cl}\label{kjbkjbfk}
The conditions of Theorem \ref{kjjhvhgu} hold for $\ell'$, that is 
\begin{align}
	|V| &\ge \ell'(j) &&\text{for every } 1 \le j \le h, \label{newBF1}\\
	e_E(\mathcal{P}) &\ge h|\mathcal{P}| - \sum_{j = 1}^{h} \ell'_{|\mathcal{P}|}(j) &&\text{for every partition $\mathcal{P}$ of $V$}. \label{newBF2}
\end{align}
\end{cl}

\begin{proof}
We first consider \eqref{newBF1}. For  $1\le j\le i',$  we have, by \eqref{bjehvde} and \eqref{mkjfeljjk2}, that $|V| \ge \ell(j)=\ell'(j).$  For  $i' + 1\le j\le h,$ we get, by \eqref{bjehvde}, \eqref{kjvdkjzvdjk} and \eqref{mkjfeljjk3}, that  $|V| \ge \ell(i')\ge  \ell^*\ge \ell'(j),$ so \eqref{newBF1} holds. 

\noindent To show that~\eqref{newBF2}  holds, let $\mathcal{P}$ be a partition of $V$. 

If $|\mathcal{P}| \le \ell^* - 1$ then, by \eqref{kjvdkjzvdjk}, \eqref{ellass} and \eqref{mkjfeljjk2} for $1\le j\le i',$ and, by \eqref{mkjfeljjk3} for $i' + 1\le j\le h,$   we get $|\mathcal{P}| \le \ell^* - 1\le \ell'(j)$ for $1\le j\le k$, thus  $e_E(\mathcal{P}) \ge 0=h|\mathcal{P}| - \sum_{j = 1}^{h} \ell'_{|\mathcal{P}|}(j)$, so \eqref{newBF2} holds.

If $|\mathcal{P}| \ge \ell^*$ then we have, by  \eqref{kjvdkjzvdjk} and \eqref{ellass},   that $|\mathcal{P}| \ge \ell^*> \ell(j)$ that is $\ell_{|\mathcal{P}|}(j)=\ell(j)$ for $i' + 1\le j\le k$, and, by \eqref{mkjfeljjk3},  we get $|\mathcal{P}| \ge \ell^* \ge \ell'(j)$  that is $\ell'_{|\mathcal{P}|}(j)=\ell'(j)$  for $i' + 1\le j\le k.$ Thus, by \eqref{mkjfeljjk2}  and \eqref{mkjfeljjk}, we have 

\begin{align}
	\sum_{j = 1}^{k} \ell_{|\mathcal{P}|}(j) = \sum_{j = 1}^{i'} \ell_{|\mathcal{P}|}(j) + \sum_{j = i' + 1}^{k} \ell(j) 
	= \sum_{j = 1}^{i'} \ell'_{|\mathcal{P}|}(j) + \sum_{j = i' + 1}^{h} \ell'(j)
	= \sum_{j = 1}^{h} \ell'_{|\mathcal{P}|}(j).\label{filou}
\end{align}
By~\eqref{jvjvkjbkhlhio} and~\eqref{filou}, we get that \eqref{newBF2} holds:
	$e_E(\mathcal{P}) \ge h |\mathcal{P}| - \sum_{j = 1}^{k} \ell_{|\mathcal{P}|}(j)=h |\mathcal{P}| - \sum_{j = 1}^{h} \ell'_{|\mathcal{P}|}(j).$
\end{proof}

 According to Claim \ref{kjbkjbfk} and Theorem \ref{kjjhvhgu} applied for $\ell'$, there exists a packing $\mathcal{F}$ of $h$ spanning  forests in $G$ with $\ell'(1),\dots,\ell'(h)$ connected components. By \eqref{mkjfeljjk2}, \eqref{mkjfeljjk3}, \eqref{mkjfeljjk}  and $c_{\min}(\mathcal{F})\ge\ell^*-1\ge \ell(i+1),$ the index $i'$ and the packing $\mathcal{F}$ satisfy (i), (ii) and (iii), which completes the proof.
\end{proof}

By Lemma \ref{lemmafspan}, there exists   a packing  of $h$ spanning forests in $G$ satisfying (i)--(iii). 
Let {\boldmath$\mathcal{F}$} $= \mathcal{F}_1 \cup \mathcal{F}_2$ be such a packing with maximum $i=|\mathcal{F}_1|$  and with minimum $c_{\min}(\mathcal{F}_2).$ 
Let {\boldmath$F_{\min}$} be a member of  $\mathcal{F}_2$ such that $c(F_{\min})=c_{\min}(\mathcal{F}_2)$. 
By (ii), we have $c_{\min}(\mathcal{F}_2)\ge c_{\min}(\mathcal{F}) \ge \ell(i + 1).$ If $c_{\min}(\mathcal{F}_2)=\ell(i + 1)$ then by  moving $F_{\min}$ from $\mathcal{F}_2$ to $\mathcal{F}_1$ we obtain a packing satisfying (i), (ii), and (iii) that contradicts the maximality of $i$.
Therefore, we have 
\begin{equation}\label{cgfe}
c_{\min}(\mathcal{F}_2) > \ell(i + 1).
\end{equation}

Let {\boldmath$\mathcal{T}$} be the set of the connected components of the forests of $\mathcal{F}_2.$
    \begin{cl}\label{claim1}
        The vertex set of an element of $\mathcal{T}$ does not cross $ \mathcal{P}(F_{\min})$.
    \end{cl}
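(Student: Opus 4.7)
The plan is to argue by contradiction via a single-edge exchange between the forest containing the offending tree and $F_{\min}$. Assume for contradiction that some tree $T\in\mathcal{T}$ has $V(T)$ crossing $\mathcal{P}(F_{\min})$. Let $F\in\mathcal{F}_2$ be the forest of which $T$ is a connected component; note $F\neq F_{\min}$, since the vertex set of any component of $F_{\min}$ lies entirely in one member of $\mathcal{P}(F_{\min})$ and hence cannot cross it. Because $T$ is connected and meets at least two components of $F_{\min}$, there exists an edge $e\in E(T)$ whose two endpoints lie in distinct members of $\mathcal{P}(F_{\min})$.

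Next I would perform the obvious exchange: delete $e$ from $F$ and add it to $F_{\min}$, leaving all other members of $\mathcal{F}$ unchanged. Since $e\in F$ was used in only one member of the original (edge-disjoint) packing, the new collection $\mathcal{F}'$ is still an edge-disjoint packing. The set $F\setminus\{e\}$ is a spanning forest with $c(F)+1$ components (removing one edge from the tree $T$ splits it in two), and $F_{\min}\cup\{e\}$ is a spanning forest with $c(F_{\min})-1=c_{\min}(\mathcal{F}_2)-1$ components (its endpoints were in different components of $F_{\min}$).

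Writing $\mathcal{F}'=\mathcal{F}_1\cup\mathcal{F}_2'$ with the same $\mathcal{F}_1$ as before, I would then check that $\mathcal{F}'$ still satisfies conditions (i)--(iii) of Lemma \ref{lemmafspan}:
(i) holds because the forests in $\mathcal{F}_1$ were not modified;
(iii) holds because the total component count over $\mathcal{F}_2$ changed by $+1-1=0$;
(ii) is the crucial point and this is where \eqref{cgfe} is used: since $c_{\min}(\mathcal{F}_2)>\ell(i+1)$, the new forest $F_{\min}\cup\{e\}$ has at least $\ell(i+1)$ components, and the new $F\setminus\{e\}$ has strictly more components than $F$, so (ii) is preserved.

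Finally, since $c(F_{\min}\cup\{e\})=c_{\min}(\mathcal{F}_2)-1$, we have $c_{\min}(\mathcal{F}_2')<c_{\min}(\mathcal{F}_2)$, while $|\mathcal{F}_1|$ is unchanged, contradicting our choice of $\mathcal{F}$ as one with maximum $|\mathcal{F}_1|$ and then minimum $c_{\min}(\mathcal{F}_2)$. The only place any care is needed is in verifying that \eqref{cgfe} gives the strict slack required to preserve (ii) after the swap; the rest is routine bookkeeping.
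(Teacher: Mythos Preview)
Your argument is correct and matches the paper's proof essentially line for line: both argue by contradiction, pick an edge of the offending component joining two components of $F_{\min}$, perform the single-edge swap, use \eqref{cgfe} to verify that (ii) survives, and derive a contradiction to the minimality of $c_{\min}(\mathcal{F}_2)$. Your explicit remark that $F\neq F_{\min}$ is a small clarification the paper leaves implicit.
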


    \begin{proof}
        Suppose for a contradiction that there exists a connected component  of a spanning forest {\boldmath$F$} in $\mathcal{F}_2$  whose vertex set crosses $\mathcal{P}(F_{\min})$. Let {\boldmath$a$} be an edge of $F$ that connects two connected components  of $F_{\min}$, {\boldmath$\mathcal{F}'_2$} $= \mathcal{F}_2 - \{F, F_{\min}\} + \{F - a, F_{\min} + a\}$ and $\mathcal{F}' = \mathcal{F}_1 \cup \mathcal{F}'_2$. Then  $c(F-a) = c(F)+1$ and, by \eqref{cgfe}, we have $c_{\min}(\mathcal{F}'_2)=c(F_{\min} + a) = c(F_{\min}) - 1=c_{\min}(\mathcal{F}_2)-1\ge \ell(i + 1)$. 
        Then $c_{\min}(\mathcal{F}')=\min\{c_{\min}(\mathcal{F}_1), c_{\min}(\mathcal{F}'_2)\}\ge \ell(i + 1).$
        Hence $\mathcal{F}' $ is a packing of $h$ spanning forests of $G$ satisfying (i), (ii), (iii), and maximizing $i$; which contradicts the minimality of $c_{\min}(\mathcal{F})$.
    \end{proof}

Let {\boldmath$G_j$} $=(V,E_j)$ where {\boldmath$E_j$} $=E(\mathcal{F}_j)$ for $j=1,2.$ 

    \begin{cl}\label{uncrossclaim}
         There exists an $(h - i)$-regular packing of $k - i$ forests in $G_2$ with $\ell(i + 1), \ldots, \ell(k)$ connected components.
    \end{cl}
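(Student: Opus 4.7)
The plan is to prove Claim \ref{uncrossclaim} by induction on $|V|$, with Theorem \ref{kbjelciueundi} itself serving as the inductive hypothesis for graphs on fewer vertices. The key structural input is Claim \ref{claim1}: every edge of $E_2$ lies in a single part of $\mathcal{P}(F_{\min})$, so $E_2 = \bigsqcup_m E_2[P_m]$, and on each part the restrictions $\{F[P_m] : F \in \mathcal{F}_2\}$ constitute an $(h - i)$-regular packing of $h - i$ spanning forests of $(P_m, E_2[P_m])$. Since $c_{\min}(\mathcal{F}_2) > \ell(i + 1) \ge 1$, we have $|\mathcal{P}(F_{\min})| \ge 2$, so $|P_m| < |V|$ for every $m$, making the inductive hypothesis available on each part.

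The reduction goes as follows. For each part $P_m$ I would choose a function $\ell_m : \{1, \dots, k - i\} \to \mathbb{Z}_{\ge 0}$ satisfying $\ell_m(s) \le |P_m|$ and the global compatibility $\sum_m \ell_m(s) = \ell(i + s)$ for every $s$. Invoking the inductive Theorem \ref{kbjelciueundi} on $(P_m, E_2[P_m])$ with parameters $(h - i, k - i, \ell_m)$ produces a local $(h - i)$-regular packing $\{B_s[P_m]\}_s$ whose $s$-th member has $\ell_m(s)$ components. Assembling $B_s := \bigsqcup_m B_s[P_m]$ then yields an $(h - i)$-regular packing of $k - i$ forests in $G_2$ in which $c(B_s) = \sum_m \ell_m(s) = \ell(i + s)$, as required; the regularity transfers because each $v \in P_m$ belongs to exactly $h-i$ of the $B_s[P_m]$'s.

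The main obstacle is to verify, for the chosen $\ell_m$'s, the local partition inequality $\sum_s \min\{\ell_m(s), |\mathcal{Q}|\} + e_{E_2[P_m]}(\mathcal{Q}) \ge (h - i)|\mathcal{Q}|$ for every partition $\mathcal{Q}$ of $P_m$. The necessity direction of Theorem \ref{kbjelciueundi} applied to the existing packing on $P_m$ already gives the weaker bound $\sum_{F \in \mathcal{F}_2} \min\{c(F[P_m]), |\mathcal{Q}|\} + e_{E_2[P_m]}(\mathcal{Q}) \ge (h - i)|\mathcal{Q}|$, so the task reduces to selecting $\ell_m$ with the dominance $\sum_s \min\{\ell_m(s), |\mathcal{Q}|\} \ge \sum_{F \in \mathcal{F}_2} \min\{c(F[P_m]), |\mathcal{Q}|\}$. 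Because $\sum_s \ell_m(s) = \sum_F c(F[P_m])$ holds by count and because every $\ell(i + s) \le \ell(i + 1) < c_{\min}(\mathcal{F}_2) \le c(F)$, the target component counts are strictly smaller, and hence more spread out, than the current ones. Combining Claim \ref{submodpart} with a transportation-type distribution of the values $\ell(i + s)$ across the parts $P_m$ (respecting the caps $\ell_m(s) \le |P_m|$, which are consistent since $\sum_m |P_m| = |V| \ge \ell(i + s)$) should deliver the required dominance, hence the local partition inequality, and thus close the induction.
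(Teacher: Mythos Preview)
Your inductive approach is genuinely different from the paper's, but it has a real gap at its core: you never construct the functions $\ell_m$. The sentence ``a transportation-type distribution \dots\ should deliver the required dominance'' is doing all the work, and it is not clear that it can. You need, simultaneously, (a) the row constraints $\sum_s \ell_m(s)=\sum_{F\in\mathcal{F}_2} c(F[P_m])$ for every $m$, (b) the column constraints $\sum_m \ell_m(s)=\ell(i+s)$ for every $s$, (c) the caps $0\le \ell_m(s)\le |P_m|$, and (d) for \emph{every} $m$ and every $q$ the dominance $\sum_s \min\{\ell_m(s),q\}\ge \sum_{F}\min\{c(F[P_m]),q\}$. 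The global majorization you invoke (all $\ell(i+s)<c_{\min}(\mathcal{F}_2)\le c(F)$) is a statement about the vectors $(\ell(i+s))_s$ and $(c(F))_F$, not about their restrictions to a single block $P_m$, where the values $c(F[P_m])$ can be arbitrary positive integers with no ordering relation to the unknown $\ell_m(s)$'s. Claim~\ref{submodpart} compares two pairs with equal sum; it does not build a feasible transportation with the right majorization on every row. So this step needs an actual argument, and it is not obvious one exists. A secondary technicality: Theorem~\ref{kbjelciueundi} is stated for $\ell$ with values in $\mathbb{Z}_{>0}$, so entries $\ell_m(s)=0$ require you to drop those indices and re-check the hypotheses for the smaller packing.

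For comparison, the paper bypasses all of this with a one-paragraph direct construction. It orders the set $\mathcal{T}$ of all components of forests in $\mathcal{F}_2$ lexicographically by (forest index, part index in $\mathcal{P}(F_{\min})$), observes from Claim~\ref{claim1} that any two intersecting elements of $\mathcal{T}$ lie in the same part but different forests and hence are at least $p-1$ apart in this order (where $p=|\mathcal{P}(F_{\min})|$), and then simply lets $F^*_j$ be the next $\ell(j)$ consecutive elements of $\mathcal{T}$. Since $\ell(j)\le \ell(i+1)<p$, each block of $\ell(j)$ consecutive trees is pairwise disjoint, so $F^*_j$ is a forest with $\ell(j)$ components; condition~(iii) guarantees the blocks exactly exhaust $\mathcal{T}$, giving the desired $(h-i)$-regular packing. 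No induction, no transportation, no dominance.
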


\begin{proof}
Let $\{${\boldmath$V_1$}$, \ldots,${\boldmath$V_p$}$\} = \mathcal{P}(F_{\min})$. For  $T\in\mathcal{T}$, let {\boldmath$\lambda(T)$} and {\boldmath$\mu(T)$} be the indices such that $V(T)\in\mathcal{P}(F_{\lambda(T)})$ and $V(T) \subseteq V_{\mu(T)}$. 
Note that $\mu(T)$ is well defined because, by Claim \ref{claim1}, $V(T)$ does not cross $\mathcal{P}(F_{\min})$. We introduce an order on $\mathcal{T}$. For $T, T'\in\mathcal{T}$, we say $T$ precedes $T'$ if either $\lambda(T) < \lambda(T')$ or  $\lambda(T) = \lambda(T')$ and $\mu(T) \le \mu(T');$ 
in case of equality we can arbitrarily choose the order. Note that if $T, T'\in\mathcal{T}$  intersect then $\lambda(T) \neq \lambda(T')$ and $\mu(T) = \mu(T')$, so  there are at least $p - 1$ elements of $\mathcal{T}$ between $T$ and $T'$ in the order. For $j \in \{i + 1, \ldots, k\}$, let {\boldmath$F^*_j$}  be  the union of the first $\ell(j)$  consecutive elements of $\mathcal{T}$ in the order that have not been taken yet. By \eqref{ellass} and \eqref{cgfe}, we have $\ell(j)\le\ell(i + 1) < c(F_{\min})=|\mathcal{P}(F_{\min})|=p.$ Hence each $F^*_j$ contains  pairwise disjoint elements of $\mathcal{T}$, implying that each $F^*_j$ is a forest. By (iii), each element of $\mathcal{T}$ is used exactly once in  {\boldmath$\mathcal{F}^*$} $=\{F^*_{i+1}, \ldots, F^*_k\}$. Since  $\mathcal{F}_2$ is an $(h-i)$-regular  packing,  $\mathcal{F}^*$ is an $(h - i)$-regular packing of $k - i$ forests in $G_2$ with $\ell(i + 1), \ldots, \ell(k)$ connected components.
\end{proof}
    
By Claim \ref{uncrossclaim}, there exists an $(h - i)$-regular packing $\mathcal{F}^*$ of $k - i$ forests in $G_2$ with 
$\ell(i + 1), \ldots, \ell(k)$ connected components. Recall that $\mathcal{F}_1$ is an $i$-regular packing of $i$ forests in $G_1$ with 
$\ell(1), \ldots, \ell(i)$ connected components. Further,  $E_1 \cap E_2=\emptyset$ and $E(\mathcal{F}^*)\subseteq E_2$. Therefore, $\mathcal{F}_1\cup\mathcal{F}^*$ is an $h$-regular packing of $k$ forests in $G$ with
$\ell(1), \ldots, \ell(k)$ connected components that completes the proof of Theorem \ref{kbjelciueundi}.
\end{proof}

\subsection{Proof of Theorem \ref{livzdljzvdmz}}\label{subsecproofregforwithbounds}

\begin{proof}
The \textbf{necessity} of this theorem follows from the necessity of Theorem \ref{jjjnbbbnajkhypreg}.

\medskip

Let us now prove the \textbf{sufficiency}. Suppose that~\eqref{hValpha},~\eqref{konfeiobfiueguye1m} and~\eqref{konfeiobfiueguye2m} hold.
We distinguish the case when $h|V| < \beta$. According to~\eqref{totnecessary} and~\eqref{indivnecessary}, there exists $\ell^* : \mathbb{Z}_k \rightarrow \mathbb{Z}_{\ge 0}$ such that $\ell'(i) \ge \ell^*(i) \ge \ell(i)$ for every $1 \le i \le k$ and $\ell^*(\mathbb{Z}_k) = h|V|$. By applying the proof of Claim~\ref{uncrossclaim} to a packing of $h$ spanning forests of isolated vertices, we obtain an $h$-regular packing of $k$ forests with $\ell^*(1), \ldots, \ell^*(k)$ connected components. It is $(\ell, \ell')$-bordered by construction and by~\eqref{hValpha} and $h|V| < \beta$ it is also $(\alpha, \beta)$-limited. 

\medskip

Suppose now that $h|V| \ge \beta$. Let {\boldmath$\ell^*$} $: \mathbb{Z}_k \rightarrow \mathbb Z_{\ge 0}$ be a function that satisfies
\begin{align}
    \beta &\ge \ell^*(\mathbb{Z}_k), \label{*1}\\
    \ell'(i) &\ge \ell^*(i) \ge \ell(i) &&\text{for every } 1 \le i \le k, \label{*2}\\
   \beta - \ell^*(\mathbb{Z}_k)+\ell^*_{|\mathcal{P}|}(\mathbb{Z}_k) &\ge h|\mathcal{P}|- e_E(\mathcal{P})     &&\text{for every partition } \mathcal{P} \text{ of } V, \label{cond3*}\\
    \ell^*(\mathbb{Z}_k) & \text{ is maximum}.
\end{align}

Note that such $\ell^*$ exists because, by~\eqref{totnecessary},~\eqref{indivnecessary} and~\eqref{konfeiobfiueguye1m}, $\ell$ satisfies~\eqref{*1},~\eqref{*2} and~\eqref{cond3*}.
   
\begin{Lemma}\label{claimeq}
    $\ell^*(\mathbb{Z}_k) = \beta$.
\end{Lemma}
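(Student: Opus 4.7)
The plan is to argue by contradiction: assuming $\ell^{*}(K)<\ell'(0)$, I will find some $i\in\{1,\dots,k\}$ such that incrementing $\ell^{*}(i)$ by one preserves \eqref{*1}, \eqref{*2} and \eqref{cond3*}, contradicting the maximality of $\ell^{*}(K)$. Since $\ell^{*}(K)<\ell'(0)\le\ell'(K)$ by \eqref{jsjhvjhzv1}, the index set $I=\{i:\ell^{*}(i)<\ell'(i)\}$ is nonempty.

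First I would analyze the local effect of replacing $\ell^{*}(i)$ by $\ell^{*}(i)+1$ for some $i\in I$. Conditions \eqref{*1} and \eqref{*2} remain valid; meanwhile, for a partition $\mathcal{P}$ of $V$, the LHS of \eqref{cond3*} drops by exactly $1$ when $|\mathcal{P}|\le\ell^{*}(i)$ (since then $\min\{\ell^{*}(i),|\mathcal{P}|\}=|\mathcal{P}|$ is unchanged while $-\ell^{*}(K)$ decreases by $1$) and is unchanged when $|\mathcal{P}|>\ell^{*}(i)$. Hence the increment is allowed precisely when no partition \emph{tight} in \eqref{cond3*} satisfies $|\mathcal{P}|\le\ell^{*}(i)$. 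By the maximality of $\ell^{*}(K)$, for each $i\in I$ there must therefore exist a tight partition $\mathcal{P}_{i}$ with $|\mathcal{P}_{i}|\le\ell^{*}(i)$.

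Next I would uncross these partitions into a single small tight one. Set $m(\mathcal{P})=h|\mathcal{P}|-e_{E}(\mathcal{P})-\ell^{*}_{|\mathcal{P}|}(K)$. Combining Lemma~\ref{submodeep} on $e_{E}$ with Claim~\ref{submodpart} applied coordinate-wise to each $\ell^{*}(j)$ (whose hypotheses are supplied by \eqref{kvkjvk3} and \eqref{usecl1} with $a_{1}=|\mathcal{P}_{1}|,\,a_{2}=|\mathcal{P}_{2}|,\,a'_{1}=|\mathcal{P}_{1}\sqcap\mathcal{P}_{2}|,\,a'_{2}=|\mathcal{P}_{1}\sqcup\mathcal{P}_{2}|$) yields $m(\mathcal{P}_{1})+m(\mathcal{P}_{2})\le m(\mathcal{P}_{1}\sqcap\mathcal{P}_{2})+m(\mathcal{P}_{1}\sqcup\mathcal{P}_{2})$. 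Since $m\le\ell'(0)-\ell^{*}(K)$ everywhere by \eqref{cond3*}, the class of tight partitions is closed under $\sqcap$ and $\sqcup$. Iterating binary $\sqcup$ over $i\in I$ and using \eqref{usecl1} at each step produces a tight partition $\mathcal{P}^{*}$ with $|\mathcal{P}^{*}|\le\min_{i\in I}|\mathcal{P}_{i}|\le\min_{i\in I}\ell^{*}(i)$.

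The contradiction then comes from comparing $\ell^{*}$ with $\ell'$ on $\mathcal{P}^{*}$. Tightness of $\mathcal{P}^{*}$ combined with \eqref{konfeiobfiueguye2m} applied to $\mathcal{P}^{*}$ gives $\ell'_{|\mathcal{P}^{*}|}(K)-\ell^{*}_{|\mathcal{P}^{*}|}(K)\ge\ell'(0)-\ell^{*}(K)>0$. But every summand is zero: for $j\notin I$ one has $\ell'(j)=\ell^{*}(j)$, and for $j\in I$ one has $|\mathcal{P}^{*}|\le\ell^{*}(j)<\ell'(j)$, so both truncations equal $|\mathcal{P}^{*}|$. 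This forces $\ell^{*}(K)=\ell'(0)$. The main obstacle I foresee is the uncrossing step, where one must justify that iterated $\sqcup$ preserves both tightness and the size bound $|\mathcal{P}^{*}|\le\min_{i\in I}\ell^{*}(i)$, so that one single partition blocks every index of $I$ at once.
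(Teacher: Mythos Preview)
Your proof is correct, but it takes a genuinely different route from the paper's.

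The paper avoids uncrossing entirely. It observes that the function $p\mapsto \ell'(0)-\ell^{*}(K)+\ell^{*}_{p}(K)-\ell'_{p}(K)$ is positive at $p=0$ (since $\ell^{*}(K)<\ell'(0)$) and nonpositive at $p=|V|$ (since $\ell'(0)\le\ell'(K)$), so there is a largest $p^{*}$ where it is still positive. The drop at $p^{*}+1$ forces the existence of an index $j$ with $\ell^{*}(j)\le p^{*}<\ell'(j)$. Incrementing $\ell^{*}(j)$ is then checked directly: for partitions with $|\mathcal{P}|>p^{*}$ the left side of \eqref{cond3*} is unchanged (your own computation), while for $|\mathcal{P}|\le p^{*}$ the threshold inequality $\ell'(0)-\ell^{*}(K)+\ell^{*}_{|\mathcal{P}|}(K)>\ell'_{|\mathcal{P}|}(K)$ combined with \eqref{konfeiobfiueguye2m} yields the bound with a unit to spare. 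No tight partitions, no lattice operations.

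Your argument instead collects, for each $i\in I$, a tight partition $\mathcal{P}_{i}$ with $|\mathcal{P}_{i}|\le\ell^{*}(i)$, shows via Lemma~\ref{submodeep}, Claim~\ref{submodpart} and \eqref{kvkjvk3}--\eqref{usecl1} that $m(\mathcal{P})=h|\mathcal{P}|-e_{E}(\mathcal{P})-\ell^{*}_{|\mathcal{P}|}(K)$ is supermodular on the partition lattice, and uncrosses all the $\mathcal{P}_{i}$ under $\sqcup$ to get one tight $\mathcal{P}^{*}$ with $|\mathcal{P}^{*}|\le\min_{i\in I}\ell^{*}(i)$; the contradiction then comes from comparing with \eqref{konfeiobfiueguye2m}. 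The obstacle you flag is not real: closure of tight partitions under $\sqcup$ and the size bound both follow immediately from the supermodular inequality together with the global bound $m\le\ell'(0)-\ell^{*}(K)$ and \eqref{usecl1}, and iterated binary $\sqcup$ needs no associativity. What your approach buys is a structural picture (one partition blocking all of $I$); what the paper's buys is a completely elementary argument with no appeal to the partition-lattice machinery.
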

   
\begin{proof}
Suppose for a contradiction that $\ell^*(\mathbb{Z}_k) < \beta$. 

\begin{cl}\label{obdouyzcdytzcd}
	There exists $p^*\in\{0,\dots,|V| - 1\}$ and $j\in \mathbb{Z}_k$ such that 
\begin{align}
    \beta - \ell^*(\mathbb{Z}_k) + \ell^*_{p}(\mathbb{Z}_k) &> \ell'_{p}(\mathbb{Z}_k) \hskip 1truecm\text{for every } 0 \le p \le p^*, \label{f*gtf}\\
	\ell^*(j) \le p^* &< \ell'(j). \label{fdfgtf}
\end{align}
\end{cl}

\begin{proof}
Note that, by $\ell^*(\mathbb{Z}_k) < \beta$, we have $\beta - \ell^*(\mathbb{Z}_k) + \ell^*_{0}(\mathbb{Z}_k) = \beta - \ell^*(\mathbb{Z}_k) > 0 = \ell'_{0}(\mathbb{Z}_k)$.

Further, we have $\beta - \ell^*(\mathbb{Z}_k) + \ell^*_{|V|}(\mathbb{Z}_k) = \beta \le \ell'(\mathbb{Z}_k) = \ell'_{|V|}(\mathbb{Z}_k)$.

It follows that there exists a maximum $p^*\in\{0, \ldots, |V| - 1\}$ satisfying \eqref{f*gtf}.
By the maximality of $p^*$ and \eqref{f*gtf}, we have $\beta - \ell^*(\mathbb{Z}_k) + \ell^*_{p^* + 1}(\mathbb{Z}_k) \le \ell'_{p^* + 1}(\mathbb{Z}_k)$ and $\beta - \ell^*(\mathbb{Z}_k) + \ell^*_{p^*}(\mathbb{Z}_k) > \ell'_{p^*}(\mathbb{Z}_k)$. This implies that there exists $j \in \mathbb{Z}_k$ such that $\min\{p^* + 1, \ell'(j)\} = \min\{p^*, \ell'(j)\} + 1$ and $\min\{p^* + 1, \ell^*(j)\} = \min\{p^*, \ell^*(j)\}$. Hence $\ell'(j) \ge p^* + 1$ and $\ell^*(j) \le p^*$, that is \eqref{fdfgtf} holds.
\end{proof}

By Claim \ref{obdouyzcdytzcd}, there exists {\boldmath$p^*\in\mathbb{Z}$} and {\boldmath$j$} $\in \mathbb{Z}_k$ satisfying \eqref{f*gtf} and \eqref{fdfgtf}.
Let {\boldmath$\ell^{+}$} $: \mathbb{Z}_k \rightarrow \mathbb Z_{>0}$ be defined as follows: $\ell^{+}(i) = \ell^*(i)$ for all $i \in \mathbb{Z}_k - \{j\}$ and $\ell^{+}(j) = \ell^*(j) + 1$. 

\begin{cl}\label{bbeibfbf}
$\ell^+$ satisfies~\eqref{*1},~\eqref{*2} and~\eqref{cond3*}.
\end{cl}

\begin{proof}
By $\ell^*(\mathbb{Z}_k) < \beta$, we have $\ell^{+}(\mathbb{Z}_k) = \ell^*(\mathbb{Z}_k) + 1 \le \beta$, so \eqref{*1} holds for $\ell^+$.

By \eqref{*2}, we have $\ell(i) \le  \ell^*(i)=\ell^+(i) = \ell^*(i) \le \ell'(i)$ for every $i \in \mathbb{Z}_k - \{j\}$ and, by \eqref{fdfgtf},  we have  $\ell(j) \le \ell^*(j)\le \ell^+(j) = \ell^*(j) + 1 \le \ell'(j)$, so \eqref{*2} holds for $\ell^+$. 

Let $\mathcal{P}$ be a partition of $V.$ If $|\mathcal{P}| > p^*$ then, by $\ell^*(j) \le p^*$ and~\eqref{cond3*}, we have
$
	\beta - \ell^+(\mathbb{Z}_k) + \ell^+_{|\mathcal{P}|}(\mathbb{Z}_k)
	= \beta - (\ell^*(\mathbb{Z}_k)+1) + (\ell^*_{|\mathcal{P}|}(\mathbb{Z}_k) + 1) 
	\ge h|\mathcal{P}| - e_E(\mathcal{P}).
$
If $|\mathcal{P}| \le p^*$ then, by~\eqref{f*gtf} and~\eqref{konfeiobfiueguye2m}, we have
$	
	\beta - \ell^+(\mathbb{Z}_k) + \ell^+_{|\mathcal{P}|}(\mathbb{Z}_k) = \beta - (\ell^*(\mathbb{Z}_k) + 1) + \ell^*_{|\mathcal{P}|}(\mathbb{Z}_k)
	\ge \ell'_{|\mathcal{P}|}(\mathbb{Z}_k) 
	\ge h|\mathcal{P}| - e_E(\mathcal{P}).
$
Therefore, \eqref{cond3*} holds for $\ell^+$, that completes the proof of Claim \ref{bbeibfbf}.
\end{proof}

Claim \ref{bbeibfbf} and $\ell^+(\mathbb{Z}_k)>\ell^*(\mathbb{Z}_k)$ contradict the maximality of $\ell^*(\mathbb{Z}_k)$, completing the proof of Lemma \ref{claimeq}.
\end{proof}

By Lemma \ref{claimeq} and \eqref{cond3*}, \eqref{jvjvkjbkhlhio} holds for $\ell^*$.
By~\eqref{*2}, \eqref{bjehvde}  holds for $\ell^*$. Finally, by Lemma \ref{claimeq}, we have $h|V| \ge \beta = \ell^*(\mathbb{Z}_k)$, so \eqref{kjdjhzvdh} holds for $\ell^*$. Hence, by Theorem~\ref{kbjelciueundi},   there exists an $h$-regular packing of $k$ forests with respectively $\ell^*(1), \ldots, \ell^*(k)$ connected components. By \eqref{*2}, Lemma \ref{claimeq} and \eqref{totnecessary}, this packing is $(\ell, \ell')$-bordered and $(\alpha, \beta)$-limited, which completes the proof of Theorem~\ref{livzdljzvdmz}.
\end{proof}

\subsection{Proof of Theorem \ref{jjjnbbbnajkhypreg}}\label{hdfoibdoibio}

\begin{proof}
We first show the \textbf{necessity}. Suppose that there exists an $h$-regular $(\ell, \ell')$-bordered $(\alpha, \beta)$-limited packing of $k$ rooted hyperforests in $\mathcal{G}$. Then we can orient $\mathcal{G}$ to get a dypergraph $\mathcal{D}$ that has an $h$-regular $(\ell, \ell')$-bordered $(\alpha, \beta)$-limited packing of $k$ hyperbranchings. Then the necessity of Theorem \ref{jjjnbbbnajkhypregdir} implies the necessity of Theorem \ref{jjjnbbbnajkhypreg}.
\medskip

To show the \textbf{sufficiency}, suppose that~\eqref{hValpha},~\eqref{konfeiobfiueguye1mreg} and~\eqref{konfeiobfiueguye2mreg} hold in $\mathcal{G}$. 

\begin{Lemma}\label{trimminglemma}
The hypergraph $\mathcal{G}$ can  be trimmed to a graph $G$ that satisfies \eqref{hValpha}, \eqref{konfeiobfiueguye1m} and \eqref{konfeiobfiueguye2m}.
\end{Lemma}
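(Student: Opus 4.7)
The plan is to prove the lemma by induction on $\sum_{X\in\mathcal{E}}(|X|-2)$. Condition \eqref{ibfieuv} does not involve edges, so it survives any trimming; only the two partition inequalities need attention. It is convenient to merge them: set
\[
b_1(\mathcal{P})=h|\mathcal{P}|-\ell_{|\mathcal{P}|}(K)-\ell'(0)+\ell(K),\quad b_2(\mathcal{P})=h|\mathcal{P}|-\ell'_{|\mathcal{P}|}(K),\quad b(\mathcal{P})=\max\{b_1(\mathcal{P}),b_2(\mathcal{P})\},
\]
so that \eqref{konfeiobfiueguye1mreg} and \eqref{konfeiobfiueguye2mreg} together read $e_\mathcal{E}(\mathcal{P})\ge b(\mathcal{P})$ for every partition $\mathcal{P}$ of $V$, and the same property must be preserved. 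The inductive step reduces to the following claim: whenever $\mathcal{E}$ contains a hyperedge $X$ with $|X|\ge 3$, there is a vertex $v\in X$ such that replacing $X$ by $X-v$ keeps $e(\mathcal{P})\ge b(\mathcal{P})$ valid. Iterating the claim eventually produces a graph.

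To locate such a $v$, observe that the replacement decreases $e_\mathcal{E}(\mathcal{P})$ by exactly one on those $\mathcal{P}$ which have a member $P$ containing $X-v$ but not $v$, and leaves it unchanged otherwise. Hence $v$ fails only if some \emph{tight} partition $\mathcal{P}_v$, i.e., one with $e_\mathcal{E}(\mathcal{P}_v)=b(\mathcal{P}_v)$, has such a separating member $P_v$. Assume for contradiction that every $v\in X$ fails, and fix witnesses $\mathcal{P}_u,\mathcal{P}_v$ for two distinct $u,v\in X$. Since $|X|\ge 3$, the set $X-\{u,v\}\subseteq P_u\cap P_v$ is non-empty, while $u\in P_v\setminus P_u$ and $v\in P_u\setminus P_v$, so $P_u$ and $P_v$ are properly intersecting. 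Applying \eqref{kvkjvk2} to $P_u$ and to $P_v$ gives elements of $\mathcal{P}_u\sqcup\mathcal{P}_v$ containing them; these elements must coincide, since they are members of a partition with non-empty intersection $\supseteq P_u\cap P_v$, so a single $Y\in\mathcal{P}_u\sqcup\mathcal{P}_v$ contains $P_u\cup P_v\supseteq X$, yielding $e_{\{X\}}(\mathcal{P}_u\sqcup\mathcal{P}_v)=0$, while $X$ crosses both $\mathcal{P}_u$ and $\mathcal{P}_v$. The chain
\[
e_\mathcal{E}(\mathcal{P}_u)+e_\mathcal{E}(\mathcal{P}_v)=b(\mathcal{P}_u)+b(\mathcal{P}_v)\le b(\mathcal{P}_u\sqcap\mathcal{P}_v)+b(\mathcal{P}_u\sqcup\mathcal{P}_v)\le e_\mathcal{E}(\mathcal{P}_u\sqcap\mathcal{P}_v)+e_\mathcal{E}(\mathcal{P}_u\sqcup\mathcal{P}_v)\le e_\mathcal{E}(\mathcal{P}_u)+e_\mathcal{E}(\mathcal{P}_v),
\]
whose last step is Lemma~\ref{submodeep}, then forces equality throughout. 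Since Lemma~\ref{submodeep}'s proof establishes the final inequality as a sum of term-wise inequalities, equality forces each of them to be tight; but for our $X$ the term-wise inequality would read $1+1\le e_{\{X\}}(\mathcal{P}_u\sqcap\mathcal{P}_v)+0\le 1$, a contradiction.

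The main obstacle is the middle inequality, i.e., the supermodularity of $b=\max\{b_1,b_2\}$ under $\sqcap,\sqcup$. An index-wise application of Claim~\ref{submodpart}, combined with \eqref{kvkjvk3} and \eqref{usecl1}, gives supermodularity separately for $b_1$ and for $b_2$, but the maximum of two supermodular functions is not supermodular in general. The saving feature here is that $b_j(\mathcal{P})$ depends on $\mathcal{P}$ only through $|\mathcal{P}|$; writing $b_j(\mathcal{P})=\tilde b_j(|\mathcal{P}|)$, both $\tilde b_1,\tilde b_2$ are convex (their first differences $h-|\{i:\ell(i)>p\}|$ and $h-|\{i:\ell'(i)>p\}|$ are non-decreasing in $p$), and the difference $\tilde b_1-\tilde b_2$ is non-decreasing in $p$ since $\ell\le\ell'$. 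Combined with $|\mathcal{P}_1\sqcap\mathcal{P}_2|\ge\max\{|\mathcal{P}_1|,|\mathcal{P}_2|\}$ and $|\mathcal{P}_1\sqcup\mathcal{P}_2|\le\min\{|\mathcal{P}_1|,|\mathcal{P}_2|\}$ from \eqref{usecl1}, a short case analysis on which of $\tilde b_1,\tilde b_2$ realises the maximum at each of $|\mathcal{P}_1|,|\mathcal{P}_2|$ reduces the only non-trivial (mixed) case to verifying $\tilde b_1(s)-\tilde b_1(u)\ge\tilde b_2(v)-\tilde b_2(t)$ whenever $s\ge u\ge v\ge t$ with $s-u=v-t$, which follows from convexity of $\tilde b_1$ together with monotonicity of $\tilde b_1-\tilde b_2$. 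Establishing this supermodularity of $\max$ is the delicate point; once it is in hand, the uncrossing argument above completes the inductive step and hence the lemma.
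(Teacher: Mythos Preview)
Your proof is correct and follows the same induction-plus-uncrossing skeleton as the paper, but you handle the two partition conditions differently. The paper never merges them: since $|X|\ge 3$ and there are only two conditions, the pigeonhole principle guarantees two vertices $a,b\in X$ whose removal violates the \emph{same} condition, so the witnessing tight partitions $\mathcal{P}_a,\mathcal{P}_b$ are tight for the same $b_j$, and only the supermodularity of that single $b_j$ is needed (which is Claim~\ref{submodpart} applied coordinate-wise together with \eqref{kvkjvk3}). This sidesteps the ``max of supermodular'' issue entirely.

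Your route---proving supermodularity of $b=\max\{b_1,b_2\}$ directly---also works because both $b_j$ factor through $|\mathcal{P}|$ as discrete convex functions. In fact you labour more than necessary here: the maximum of two convex functions $f,g$ on $\mathbb{Z}$ is convex by the one-line argument $h(p-1)+h(p+1)\ge f(p-1)+f(p+1)\ge 2f(p)=2h(p)$ (taking $h=\max\{f,g\}$ and assuming $h(p)=f(p)$), so neither the case analysis nor the monotonicity of $\tilde b_1-\tilde b_2$ is needed. The paper's pigeonhole trick is the slicker of the two; your merged formulation, on the other hand, would generalise more gracefully to a setting with more than two simultaneous partition constraints, where pigeonhole on three vertices would no longer suffice.
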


\begin{proof}
We prove the lemma by induction on $\sum_{X \in \mathcal{E}} |X|.$
If for every  $X \in \mathcal{E}$, $|X| = 2$ then $\mathcal{G}$ is a graph and, \eqref{konfeiobfiueguye1mreg} and \eqref{konfeiobfiueguye2mreg} coincide with \eqref{konfeiobfiueguye1m} and \eqref{konfeiobfiueguye2m}.
Otherwise, there exists $X \in \mathcal{E}$ such that $|X| \ge 3$. We show that we can always remove a vertex from $X$ without violating \eqref{hValpha}, \eqref{konfeiobfiueguye1mreg} or \eqref{konfeiobfiueguye2mreg}. Note that the removal of a vertex from a hyperedge does not effect \eqref{hValpha}.
Suppose that no vertex of $X$ can be removed from $X$ without violating the conditions, that is, for every $x \in X$, at least one of \eqref{konfeiobfiueguye1mreg} and \eqref{konfeiobfiueguye2mreg} is violated after the removal of $x$ from $X$. By $|X| \ge 3$, there must be at least two vertices violating the same condition. Let $a, b \in X$ such that their removals violate the same condition. Since this condition is satisfied before the removal of the vertex, there exist partitions $\mathcal{P}_a$ and $\mathcal{P}_b$ of $V$, such that
either \eqref{tighti} or \eqref{tightii} hold, $e_{\mathcal{E}}(\mathcal{P}_a)$ decreases when removing $a$ from $X$ and $e_{\mathcal{E}}(\mathcal{P}_b)$ decreases when removing $b$ from $X$.
\begin{alignat}{4}
	&&\ell_{|\mathcal{P}_a|}(\mathbb{Z}_k) + e_\mathcal{E}(\mathcal{P}_a) - h|\mathcal{P}_a| &= \ell(\mathbb{Z}_k) &&- \beta &= \ell_{|\mathcal{P}_b|}(\mathbb{Z}_k) + e_\mathcal{E}(\mathcal{P}_b) - h|\mathcal{P}_b|,\label{tighti}\\
	&&\ell'_{|\mathcal{P}_a|}(\mathbb{Z}_k)+e_\mathcal{E}(\mathcal{P}_a) - h|\mathcal{P}_a| &= &&0 &= \ell'_{|\mathcal{P}_b|}(\mathbb{Z}_k)+e_\mathcal{E}(\mathcal{P}_b) - h|\mathcal{P}_b|. \label{tightii}
\end{alignat}
As removing $a$ from $X$ decreases $e_{\mathcal{E}}(\mathcal{P}_a)$, there exist $X_a, X_{\overline{a}} \in \mathcal{P}_a$ such that $a \in X_a$ and $X - a \subseteq X_{\overline{a}}$.  The same is also true for $b$ and $\mathcal{P}_b$, i.e. there exist $X_b, X_{\overline{b}} \in \mathcal{P}_b$ such that $b \in X_b$, and $X - b \subseteq X_{\overline{b}}$.
Let {\boldmath$\mathcal{P}_\sqcup$} $= \mathcal{P}_a \sqcup \mathcal{P}_b$ and {\boldmath$\mathcal{P}_\sqcap$} $ = \mathcal{P}_a \sqcap \mathcal{P}_b$.
Since $X_{\overline{a}} \cap X_{\overline{b}} \supseteq X - a - b \neq \emptyset$, $X_a \cap X_{\overline{b}} \supseteq  \{a\} \neq \emptyset$ and $X_{\overline{a}} \cap X_b \supseteq  \{b\} \neq \emptyset$, we get that $X \subseteq X_a \cup X_b \cup X_{\overline{a}} \cup X_{\overline{b}}$ is contained in a member of $\mathcal{P}_\sqcup$. Thus $X$ does not cross $\mathcal{P}_\sqcup$, i.e. 
$$e_{\{X\}}(\mathcal{P}_a) + e_{\{X\}}(\mathcal{P}_b) > e_{\{X\}}(\mathcal{P}_\sqcap) + e_{\{X\}}(\mathcal{P}_\sqcup).$$ Then, according to Lemma \ref{submodeep},  we obtain that
\begin{equation}\label{strictuncross}
	e_{\mathcal{E}}(\mathcal{P}_a) + e_{\mathcal{E}}(\mathcal{P}_b) > e_{\mathcal{E}}(\mathcal{P}_\sqcap) + e_{\mathcal{E}}(\mathcal{P}_\sqcup).
\end{equation}
By \eqref{kvkjvk3} and \eqref{usecl1}, we can apply Claim \ref{submodpart} for $\ell^* \in \{\ell, \ell'\}$ and every $i \in \mathbb{Z}_k,$ and we get
$$
	\min\{\ell^*(i), |\mathcal{P}_a|\} + \min\{\ell^*(i), |\mathcal{P}_b|\} \ge \min\{\ell^*(i), |\mathcal{P}_\sqcap|\} + \min\{\ell^*(i), |\mathcal{P}_\sqcup|\}.
$$
By summing up these inequalities for $i \in \mathbb{Z}_k,$ it follows that
\begin{equation}\label{lsubmodular}
	\ell_{|\mathcal{P}_a|}^*(\mathbb{Z}_k) + \ell_{|\mathcal{P}_b|}^*(\mathbb{Z}_k) \ge \ell_{|\mathcal{P}_\sqcap|}^*(\mathbb{Z}_k) + \ell_{|\mathcal{P}_\sqcup|}^*(\mathbb{Z}_k).
\end{equation}
If the violated condition was \eqref{konfeiobfiueguye1mreg} then, by \eqref{tighti}, \eqref{lsubmodular} for $\ell$, \eqref{strictuncross}, \eqref{konfeiobfiueguye1mreg}  and \eqref{kvkjvk3}, we have
\begin{align*}
	h(|\mathcal{P}_a| + |\mathcal{P}_b|) &= 2(\beta - \ell(\mathbb{Z}_k)) + \ell_{|\mathcal{P}_a|}(\mathbb{Z}_k) + \ell_{|\mathcal{P}_b|}(\mathbb{Z}_k) + e_{\mathcal{E}}(\mathcal{P}_a) + e_{\mathcal{E}}(\mathcal{P}_b) \\
	&> 2(\beta - \ell(\mathbb{Z}_k)) + \ell_{|\mathcal{P}_\sqcap|}(\mathbb{Z}_k) + \ell_{|\mathcal{P}_\sqcup|}(\mathbb{Z}_k) + e_{\mathcal{E}}(\mathcal{P}_\sqcap) + e_{\mathcal{E}}(\mathcal{P}_\sqcup) \\
	&\ge h(|\mathcal{P}_\sqcap| + |\mathcal{P}_\sqcup|) \\
	&= h(|\mathcal{P}_a| + |\mathcal{P}_b|),
\end{align*}
 a contradiction.

\noindent If the violated condition was \eqref{konfeiobfiueguye2mreg} then, by \eqref{tightii},  \eqref{lsubmodular} for $\ell'$, \eqref{strictuncross}, \eqref{konfeiobfiueguye2mreg}, and \eqref{kvkjvk3},  we have
\begin{align*}
	h(|\mathcal{P}_a| + |\mathcal{P}_b|) &= \ell_{|\mathcal{P}_a|}'(\mathbb{Z}_k) + \ell_{|\mathcal{P}_b|}'(\mathbb{Z}_k) + e_{\mathcal{E}}(\mathcal{P}_a) + e_{\mathcal{E}}(\mathcal{P}_b) \\
	&> \ell_{|\mathcal{P}_\sqcap|}'(\mathbb{Z}_k) + \ell_{|\mathcal{P}_\sqcup|}'(\mathbb{Z}_k) + e_{\mathcal{E}}(\mathcal{P}_\sqcap) + e_{\mathcal{E}}(\mathcal{P}_\sqcup) \\
	&\ge h(|\mathcal{P}_\sqcap| + |\mathcal{P}_\sqcup|) \\
	&= h(|\mathcal{P}_a| + |\mathcal{P}_b|),
\end{align*}
 a contradiction. 
 
 The proof of Lemma \ref{trimminglemma} is complete.
\end{proof}

By Lemma \ref{trimminglemma}, the hypergraph $\mathcal{G}$ can be trimmed to a graph $G$ satisfying \eqref{hValpha}, \eqref{konfeiobfiueguye1m} and \eqref{konfeiobfiueguye2m}. Therefore, according to Theorem \ref{livzdljzvdmz}, there exists an $h$-regular $(\ell, \ell')$-bordered $(\alpha, \beta)$-limited packing of $k$ $S_i$-forests in $G$. We can orient every $S_i$-forest in the packing to get an $S_i$-branching, which can be obtained by trimming from an $S_i$-hyperbranching in an orientation of an $S_i$-hyperforest of $\mathcal{G}$. This way we obtained the required packing which completes the proof of the theorem.
\end{proof}


\end{document}